\numberwithin{equation}{section}
\renewcommand{\!}{\negmedspace\negmedspace}
\theoremstyle{definition}
\newcommand{\Q}{\mathbb{Q}}
\newdimen\plusheight
\newcommand\+{\;\lower\plusheight\hbox{$+$}\;}
\newdimen\minusheight
\renewcommand\-{\;\lower\minusheight\hbox{$-$}\;}
\newdimen\cdotsheight
\newcommand\cds{\lower\cdotsheight\hbox{$\cdots$}}
\newcommand{\Z}{\mathbb{Z}}
\newtheorem{theorem}{Theorem}   
\newtheorem{lemma}[theorem]{Lemma}
\newtheorem{proposition}[theorem]{Proposition}
\newtheorem{corollary}[theorem]{Corollary}
\newtheorem{definition}[theorem]{Definition}
\newtheorem{remark}[theorem]{Remark}
\begin{document}

\title[Ramanujan Invariants]{Ramanujan Invariants for discriminants congruent to 
$\mathbf{5\;mod \;24}$}  
\author[ Konstantinou,  Kontogeorgis]{ Elisavet Konstantinou and  Aristides Kontogeorgis}
% \address{Department of Mathematics, National
% University of Singapore, 2~Science Drive 2, Singapore 117543;
% Max-Planck-Institut f\"ur Mathematik, Vivatsgasse 7, D-53111, Bonn, Germany}%
%  \email{matchh@nus.edu.sg}

\address{
Department of Information and Communication Systems Engineering, University of the Aegean, 83200
Karlovassi, Samos, Greece} \email{ekonstantinou@aegean.gr}

\address{
Department of Mathematics, University of Athens, Panepistimioupolis Zografou 15784,
 Athens, Greece.
} \email{kontogar@math.uoa.gr}

\begin{abstract}{
In this paper we compute the minimal polynomials of Ramanujan values $27t_n^{-12}$
for discriminants $D \equiv 5 \bmod {24}$. Our method is based on Shimura Reciprocity Law
as which  was made computationally explicit by A.Gee and P. Stevenhagen in \cite{GeeStevenhagen}. However, 
since these Ramanujan values are not class invariants, we present a modification of the 
method used in \cite{GeeStevenhagen} which can be applied on modular functions that do not
necessarily yield class invariants.}
\end{abstract}

\dedicatory{Dedicated to Professor Jannis A. Antoniadis on the occasion
     of his 60th birthday}

\maketitle

\section{Introduction}
It is known that the ring class field of imaginary quadratic orders  can  
be  generated by evaluating the $j$-invariant at certain algebraic integers.
Several other modular functions, like the 
Weber functions \cite{YuiZagier} can also be used for the generation
of the ring  class field. In \cite{GeeBordeaux,GeeStevenhagen} 
A. Gee and P. Stevenhagen developed 
a method based 
on Shimura reciprocity theory, in order to check whether a modular function 
gives rise to a class invariant and in the case  it does, they provided 
a method for the efficient computation of the corresponding minimal polynomial.
This method was generalized further in \cite{steven2} to handle the  ring class 
fields case as well. 
Shimura reciprocity law relates the Galois group of the ray class field $H_{N,\mathcal{O}}$ of conductor 
$N$ over the ring  class field $H_\mathcal{O}$ of the order $\mathcal{O}$,  to the group 
$G_N=(\mathcal{O}/N\mathcal{O})^*/\mathcal{O}^*$. 
In our study we encounter  modular functions of level $72$, and the 
structure and order of the group $G_{72}$ depends on the decomposition of 
$2 \mathcal{O}, 3\mathcal{O}$  as
product of prime ideals in $\mathcal{O}$. 
If the ideals $2,3$ do not remain inert simultaneously then a variety of modular functions 
like the Weber functions, double eta functions  etc. can be used for constructing the ring class field.

Let $K_n=\mathbb{Q}(\sqrt{-n})$ be an imaginary quadratic number field such that  
$n\equiv 19 \bmod 24$ and assume that $\mathcal{O}\subset K$. If $n$ is squarefree then $D=-n$ 
is a fundamental discriminant of $K_n$. 
In this paper we will treat the case when $2,3$ both remain inert,  i.e., 
$2 \mathcal{O}$ and $3\mathcal{O}$ are prime ideals of $\mathcal{O}$.  
In this article we are interested in the $-n \equiv 1\bmod 4$ case so we set
$\theta_n=\frac{1}{2} + i \frac{\sqrt{n}}{2}$ and we consider the order 
$\mathcal{O}=\Z[\theta_n]$ which is a maximal order if $n$ is squarefree. 
Notice that the case $n\equiv 19 \bmod 24$ is the only case where $2,3$ remain inert.

The authors used the method of A. Gee and P. Stevenhagen  \cite{KoKo,KoKo2} in order to 
construct the minimal polynomials of the Ramanujan values $t_n$ for $n\equiv 11 \bmod 24$ 
proposed by S. Ramanujan in his third notebook, pages 392 and 393 
in the pagination of \cite[vol.\ 2]{RamNotebooks}. For a definition of $t_n$, see section \ref{sec3}
eq. (\ref{gg}).
The values $t_n$ were proven to be class invariants for $n\equiv 11 \bmod 24$ by    
B. Berndt and H.H.Chan in \cite{Berndt-Chan}.
However, for $n\equiv 19 \bmod 24$ the values $t_n$ are no longer class invariants
and Ramanujan 
 proposed the use of the values 
$H_n=27 t_n^{-12}$ \cite[p.\ 317]{RamNotebooks}.

In this paper, we will prove that $H_n$ values are still not class invariants
since $K(H_n)$ is a quadratic extension of the ring 
class field. 
This is clearly an obstacle for the construction of the minimal polynomials of $H_n$
values, since A. Gee and P. Stevenhagen method can no longer be applied.  
Therefore, we propose a modification of their method 
that allows us to study the case of modular functions 
which do not give class invariants and then we proceed to the study 
of  $H_n$.

We explicitly describe a method for the construction of their minimal polynomials and examine
some interesting properties of these polynomials. 
Finally, we propose the use of values $A_n=27 t_n^{-12}+t_n^{12}/27$ that are class invariants
and generate the ring  class field. Unfortunately, $A_n$ are algebraic integers which  are not units.

We also study the relation with the modular functions 
$\mathfrak{g}_0,\mathfrak{g}_1,\mathfrak{g}_2,\mathfrak{g}_3$ introduced  by A. Gee in 
 \cite[p.\ 73]{GeePhD}. 
% The modular functions $R_i$ introduced by the 
% authors in \cite{KoKo} and are used in this paper as well, are actually products 
% of the functions $\mathfrak{g}_i$. 
In remark \ref{10} we see how the Ramanujan values
$A_n$ are naturally introduced as generators of the invariant ring 
$\mathbb{Q}[ \mathfrak{g}_0^{12},\mathfrak{g}_1^{12},\mathfrak{g}_2^{12},\mathfrak{g}_3^{12}]$, 
under the action of a cyclic permutation $\tau$ of order $4$.  Notice that 
$  \mathfrak{g}_0^{6}(\theta)$,$\mathfrak{g}_1^{6}(\theta)$,$\mathfrak{g}_2^{6}(\theta),$
$\mathfrak{g}_3^{6}(\theta)$ are inside  the ray class  field $H_{3,\mathcal{O}}$ and we are 
able to find their minimal polynomials over the ring class field.
We believe that this method of formalizing the search of class invariants in terms of invariant 
theory can  be applied to many other cases as well. 
%  of conductor $3$,
% the Galois group of $K(3)$ over the Hilbert class field is a cyclic group of order $8$ 
% and $\tau$ generates the unique subgroup of order $4$.

This method allows us to handle the case $n\equiv 3 \bmod 24$. 
In section   \ref{3mod24} we define some new class invariants and compute their
polynomials using the methods developed in the previous sections.

Finally,  we give an example of using the $A_n$ class invariant in order to construct an elliptic 
curve over the finite field $\mathbb{F}_p$, 
\[p=2912592100297027922366637171900365067697538262949\]
of prime order \[
m=2912592100297027922366635123877214056291799441739.
               \]
{\bf Acknowledgments} We would like to thank Professor Heng Huat Chan for suggesting 
the study of 
 the Ramanujan class invariant for the $-D\equiv 19 \bmod 24$ case.
%   and also  for  helping us  
% with  the proof of   lemma 
% \ref{invertibleR2}.
 We would also like to  thank Professor Peter Stevenhagen for making 
a lot of valuable comments on a previous version of the article and for pointing us to  
the relation between the $A_n$ and  to the   modular functions~$\mathfrak{g}_i$.
We also thank Professor Jannis Antoniadis for observing a pattern for the behavior of the 
index given in section \ref{3.1}.  
Finally, we would like to 
thank the magma algebra team for providing us with a free extension of the license to their 
system.
% since our University  due to several financial cuts could not afford  it anymore.  

%
%
%
\section{Class Field Theory}
A. Gee and P. Stevenhagen provided us with a method to check whether a modular function is a class invariant. 
We will follow the notation of \cite{GeeBordeaux},\cite[chapter\ 6]{Shimura} and 
\cite{steven2}.
It is known that the modular curve $X(N)$ can be defined over $\mathbb{Q}(\zeta_N)$. Let
$\mathcal{F}_N$ be the function field of $X(N)$ over $\mathbb{Q}(\zeta_N)$, i.e. the 
field of meromorphic functions on $X(N)$ with Fourier coefficients in $\mathbb{Q}(\zeta_N)$.
Observe that  $\mathcal{F}_1=\mathbb{Q}(j)$. The automorphic function field 
$\mathcal{F}$ is defined as $\mathcal{F}=\cup_{N \geq 1} \mathcal{F}_N$.

{
For the convenience of the reader we repeat here some elements of  the adelic formulation of class 
field theory and the relation to modular functions. For more information about these 
subjects  we refer to \cite[sections\ 5.2,6.4]{Shimura} and to 
the articles of Gee-Stevenhagen \cite{GeeBordeaux},\cite{GeeStevenhagen},\cite{steven2}.
}

Fix an imaginary quadratic field $K$ and an order $\mathcal{O}=\Z[\theta]$ in $K$. 
Let $K^{\mathrm{ab}}$ be the maximal abelian extension of $K$. 
For each rational prime $p\in \Z$
 we consider  $K_p=\Q_p \otimes_{\Q} K$ and 
$\mathcal{O}_p=\Z_p \otimes_\Z\mathcal{O}$.   
We will denote by $\hat{\mathbb{Z}}=\lim_{\leftarrow n} \Z/n\Z,
\hat{\mathcal{O}}=\mathcal{O}\otimes_\Z \hat\Z=\lim_{\leftarrow n} \mathcal{O}/n\mathcal{O}
=\hat{\Z}\theta+\hat{\Z}
$ the profinite completions of the rings 
$\Z,\mathcal{O}$.  
{
Notice that $\hat{\mathcal{O}^*}=\prod_{p} \mathcal{O}^*_p$.
}
We consider the group 
\begin{eqnarray*}
J_K^f=\prod_p {}'  K^*_p 
\end{eqnarray*}
of finite id\`eles of $K$. The restricted product is taken with respect to the subgroups 
$\mathcal{O}_p^* \subset K^*_p$.  
We denote by $[\sim,K]$ the Artin map on $J_K^f$. 
There is a map $g_\theta$ which connects the two  short exact sequences:
\[
 \xymatrix{
1 \ar[r] & \mathcal{O}^* \ar[r] & \prod_{p} \mathcal{O}^*_p \ar[r]^{\!\!\!\!\![\sim,K]} \ar[d]^{g_\theta}  
& 
\mathrm{Gal}(
K^{
\mathrm{ab}
}
/K(j(\theta)
) \ar[r] & 1 
\\
1 \ar[r] & \{\pm 1\} \ar[r] & \mathrm{GL}_2(\hat{\Z}) \ar[r] & \mathrm{Gal}(\mathcal{F}/\mathcal{F}_1) \ar[r] & 1
}, 
\]
such that the image of $f(\theta)^x$ of a modular function $f$ evaluated at $\theta$  
 under the Artin symbol of $x\in \mathcal{O}^*$ 
is given by 
\begin{equation} \label{int-action}
 f(\theta)^x=f^{g_\theta(x^{-1})}(\theta).
\end{equation}
The morphism $g_\theta$ is described as follows:
Every  id\`ele $x\in \hat{\mathcal{O}}^*$ corresponds to  a $2\times 2$ matrix representing the
linear action of $\theta$ on $\hat{\Z} \theta+ \hat{\Z}$ by multiplication. If $x^2+Bx+C$ is 
the irreducible polynomial of $\theta$ then the matrix for $x=s\theta+t$ is computed to be 
\[
 g_\theta(x)=\begin{pmatrix}
              t-Bs & -Cs \\ s & t
             \end{pmatrix}.
\]

\begin{theorem}
 Let $h\in \mathcal{F}$ which  does not have a pole at $\theta$ and suppose that 
$\Q(j) \subset \Q(h)$. The function value $h(\theta)$ is a class invariant if 
and only if every element of the image $g_\theta \left(\prod_p \mathcal{O}_p^* \right)
\subset \mathrm{GL}_2(\hat{\Z})$ acts trivially on $h$.
\end{theorem}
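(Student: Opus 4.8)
The plan is to turn the assertion into a comparison between the Galois conjugates of the \emph{number} $h(\theta)$ and the conjugates of the \emph{function} $h$, and to pin down the one place where ``the value $h(\theta)$ is fixed'' may be promoted to ``the function $h$ is fixed''. Write $V=g_\theta\big(\prod_p\mathcal{O}_p^*\big)\subset\mathrm{GL}_2(\hat{\Z})$. The hypothesis $\Q(j)\subset\Q(h)$ means $j=R(h)$ for some $R\in\Q(X)$, so $j(\theta)=R(h(\theta))\in K(h(\theta))$; hence $H_\mathcal{O}=K(j(\theta))\subseteq K(h(\theta))$ and the statement ``$h(\theta)$ is a class invariant'' is equivalent to $h(\theta)\in H_\mathcal{O}$.

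First I would record the value-level reformulation. As $h\in\mathcal{F}_N$ for some $N$, the value $h(\theta)$ lies in $K^{\mathrm{ab}}$, so $h(\theta)\in H_\mathcal{O}$ if and only if $h(\theta)$ is fixed by $\mathrm{Gal}(K^{\mathrm{ab}}/H_\mathcal{O})$. The top exact sequence of the diagram shows that $[\sim,K]$ maps $\prod_p\mathcal{O}_p^*$ onto $\mathrm{Gal}(K^{\mathrm{ab}}/H_\mathcal{O})$, and by (\ref{int-action}) the conjugates of $h(\theta)$ under this group are exactly the numbers $h^{g}(\theta)$ with $g\in V$. Therefore $h(\theta)$ is a class invariant if and only if $h^{g}(\theta)=h(\theta)$ for all $g\in V$. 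The implication ``$\Leftarrow$'' of the theorem is then immediate, since $h^{g}=h$ gives $h^{g}(\theta)=h(\theta)$; the entire content is the converse, i.e.\ the passage from the equalities $h^{g}(\theta)=h(\theta)$ of \emph{values} to the equalities $h^{g}=h$ of \emph{functions}.

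For the converse I would prove that evaluation at $\theta$ is injective on the conjugates of $h$, namely that for $g,g'\in\mathrm{GL}_2(\hat{\Z})$ one has $h^{g}(\theta)=h^{g'}(\theta)\Rightarrow h^{g}=h^{g'}$. Two facts combine here. On one hand $\Q(h)$ is generated over $\Q$ by the single transcendental $h$, so it is a rational function field and its smooth projective model $X_h$ is a copy of $\mathbb{P}^1$ on which $h$ is a global coordinate, hence \emph{injective} on points. On the other hand the inclusion $\Q(j)\subset\Q(h)$ realises $X_h$ as an intermediate cover $X_h\to X(1)$ dominated by $X(N)\to X(1)$, and the latter is ramified only over $j=0,1728,\infty$ (the two elliptic points and the cusp); since $j(\theta)\neq\infty$ (as $\theta\in\mathbb{H}$) and $j(\theta)\neq 0,1728$, the cover $X_h\to X(1)$ is \'etale over $j(\theta)$, so its fibre consists of $m=[\Q(h):\Q(j)]$ distinct points. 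The values of the coordinate $h$ at these $m$ points are the numbers $h^{g}(\theta)$ (this is the geometric meaning of the $\mathrm{GL}_2$-action on $\mathcal{F}_N$), and by injectivity of $h$ they are pairwise distinct; thus distinct conjugate functions take distinct values at $\theta$. Taking $g'=1$ and $g\in V$ turns $h^{g}(\theta)=h(\theta)$ into $h^{g}=h$, completing the ``$\Rightarrow$'' direction.

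The main obstacle is precisely this injectivity of specialization, because $\theta$ is a highly non-generic (complex-multiplication) point and no appeal to a generic fibre is permitted; what makes it work is the combination of $h$ being a global coordinate on $X_h\cong\mathbb{P}^1$ with the fact that a modular cover of the $j$-line can ramify only over $0,1728,\infty$. I would therefore isolate as a lemma the clean statement that, for a CM point $\theta$ with $j(\theta)\neq 0,1728$, the map $h^{g}\mapsto h^{g}(\theta)$ is injective on conjugates regular at $\theta$, noting that conjugates having a pole at $\theta$ take the value $\infty$ and so cannot collide with the finite value $h(\theta)$. The only genuine exceptions are the orders $\mathcal{O}=\Z[i]$ and $\mathcal{O}=\Z[\zeta_3]$, with $j(\theta)=1728$ and $0$; these are excluded in our setting, where $\mathcal{O}^*=\{\pm1\}$.
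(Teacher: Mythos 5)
Your proof is correct in substance, but it is a genuinely different route from the paper's: the paper offers no argument at all for this theorem, citing Gee's Corollary 3 of \cite{GeeBordeaux}, whose proof (like Proposition \ref{refpro}, quoted later in the paper from \cite[eq.\ (3.5)]{steven2}) runs through Shimura's adelic machinery --- the reciprocity law together with the statement that, when $\mathcal{F}/\Q(h)$ is Galois, fixing the \emph{value} $h(\theta)$ is equivalent to fixing the \emph{function} $h$. What you supply is precisely an elementary, self-contained proof of that value-to-function step: you identify the conjugate values $h^{g}(\theta)$ with the points of the fibre of $X_h \to X(1)$ over $j(\theta)$, and exploit that $h$ is a global coordinate on $X_h \cong \mathbb{P}^1$ together with the fact that modular covers of the $j$-line ramify only over $0,1728,\infty$, so the fibre over $j(\theta)$ has exactly $m=[\Q(h):\Q(j)]$ points and evaluation is forced to be a bijection from the $m$ conjugate functions onto it. The trade-off: the citation/Shimura route is shorter and works for any $h$ with $\mathcal{F}/\Q(h)$ Galois, while your route genuinely uses that $\Q(h)$ is a rational function field, but in return it makes visible exactly where each hypothesis enters --- in particular it exposes that the statement fails for the orders $\Z[i]$ and $\Z[\zeta_3]$ (the elliptic stabilizer of $\theta$ fixes all values but not all functions), an exclusion the paper leaves implicit in its standing assumption $\mathcal{O}^*=\{\pm 1\}$ and which you correctly flag. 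One step you should write out rather than gesture at: the claim that the values of $h$ at the fibre points are exactly the numbers $h^{g}(\theta)$ needs (i) lifting fibre points through $X(N)(\mathbb{C})$, where the points over $j(\theta)$ are the classes of the $\mathrm{SL}_2(\Z)$-translates $\gamma\theta$, giving values $h(\gamma\theta)=h^{\gamma}(\theta)$, and (ii) the observation that a surjection from the $m$-element set of conjugates onto an $m$-point fibre is automatically injective; as written, your parenthetical about ``the geometric meaning of the $\mathrm{GL}_2$-action'' glosses over the fact that only the $\mathrm{SL}_2$-part of $\mathrm{Gal}(\mathcal{F}_N/\Q(j))$ acts by deck transformations, the determinant part acting instead on Fourier coefficients.
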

\begin{proof}
 See \cite[Cor.\ 3]{GeeBordeaux}.
\end{proof}
Now we will consider  the non class invariant case. We have the following tower of fields:
\[
 \xymatrix{
K^{\mathrm{ab}} \ar@{-}[d]^{H_1} \ar@{-}@/_2pc/[dd]_{H={\hat{\mathcal{O}}^*}/
{\mathcal{O}^*}} \\ 
K(h(\theta)) \ar@{-}[d] \\
K\big(j(\theta)\big)
}
\]
% The group $H_1$ is the stabilizer $\mathrm{Stab}_{\Q(h)}$ of $\Q(h)$
% and is equal to the open subgroup 
{ Consider the open subgroup} 
\[
 \mathrm{Stab}_{\Q(h)}=\{\alpha \in \mathrm{GL}_2(\hat{\Z}): h^\alpha=h\}.
\]
The preimage $g_\theta^{-1}( \mathrm{Stab}_{\Q(h)})$ contains 
$\mathcal{O}^*=\{\pm 1\}$ and 
$g_\theta^{-1}( \mathrm{Stab}_{\Q(h)}) \subset \prod_p \mathcal{O}_p^*$. 
Notice that $h(\theta)$ is a class invariant if and only if 
$g_\theta^{-1}( \mathrm{Stab}_{\Q(h)}) = \prod_p \mathcal{O}_p^*$.
Let $H_1=\mathrm{Gal}(K^{\mathrm{ab}}/K(h(\theta)))$.
We can write   $H$ as a disjoint union of the cosets $H= \bigcup \sigma_i H_1$, and
if $h(\theta)$ is not a class invariant then there is more than one coset. 

Now we will write the Shimura reciprocity law in full generality taking into account the 
full automorphism group of the function field $\mathcal{F}$. We consider the following 
two short exact sequences, connected with  morphism 
$g_\theta: J^f_K \rightarrow \mathrm{GL}_2(A^f_\Q)$:
\begin{equation} \label{classField1}
 \xymatrix{
1 \ar[r] & K^* \ar[r] & J^f_K \ar[r]^{[\sim,K]} \ar[d]^{g_\theta}  & \mathrm{Gal}(K^{\mathrm{ab}}/K) \ar[r] & 1 \\
1 \ar[r] & \Q^* \ar[r] & \mathrm{GL}_2(A^f_\Q) \ar[r] & \mathrm{Aut}(\mathcal{F}) \ar[r] & 1.
}
\end{equation}
The map $g_\theta$ is the $\Q$-linear extension of the map $g_\theta$ given in Eq. (\ref{int-action})
which is a homomorphism $J_K^f \rightarrow   \mathrm{GL}_2(A^f_\Q)$.
The action of $z\in \mathrm{GL}_2(A^f_\Q)$ on $\mathcal{F}$ is given by writing 
$z=u\alpha$ where $u\in  \mathrm{GL}_2(\hat{\Z})$ and $\alpha\in  \mathrm{GL}_2(\Q)^+$. 
The group $\mathrm{GL}_2(\Q)^+$ consists of rational $2\times 2$ 
matrices with positive determinant and 
acts on $\mathbb{H}$ via linear fractional transformations.
Then we define $f^{u\cdot \alpha}=(f^u)^\alpha$.
For more details on this construction we refer to \cite[p.\ 6]{steven2}

The Shimura reciprocity theorem states that:
\begin{theorem}
For $h\in \mathcal{F}$ and $x\in J_K^f$ we have 
\[
 h(\theta)^{[x^{-1},K]}=h^{g_\theta(x)}(\theta).
\]
% If $G\subset \mathrm{GL}_2(A^f_\Q)$ is an open subgroup with fixed field $F \subset \mathcal{F}$, 
% then the subgroup of $J_K^f$ that acts trivially on $K(F(\theta))$ with respect to the Artin 
% map is generated by $K^*$ and $g_\theta^{-1}(G)$. 
\end{theorem}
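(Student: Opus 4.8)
The plan is to prove the asserted equality on a set of generators of $\mathcal{F}$ and to feed the genuine arithmetic into the main theorem of complex multiplication. Both sides are ring homomorphisms in $h$: the left-hand side is the image of $h(\theta)$ under the automorphism $[x^{-1},K]$ of $K^{\mathrm{ab}}$, while by the lower exact sequence in (\ref{classField1}) the element $g_\theta(x)$ acts on $\mathcal{F}$ as a field automorphism (well defined modulo $\Q^*$, which acts trivially), after which evaluation at $\theta$ is a ring homomorphism on the subring of functions regular there. Consequently it suffices to verify the formula for $h=j$ and for the Fricke functions $f_{\mathbf{a}}$, indexed by $\mathbf{a}\in(\Q/\Z)^2\setminus\{0\}$, since these together generate $\mathcal{F}$ over $\Q$.

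For $h=j$ I would use that $j$ lies in the level-one field $\mathcal{F}_1=\Q(j)$, so it is fixed by the $\mathrm{GL}_2(\hat{\Z})$-part of $g_\theta(x)$ and only the rational matrix $\alpha\in\mathrm{GL}_2(\Q)^+$ in the decomposition $g_\theta(x)=u\alpha$ acts, through its linear fractional transformation on $\mathbb{H}$. The resulting identity $j(\theta)^{[x^{-1},K]}=j(\alpha^{-1}\theta)$ is exactly the classical statement that the Artin symbol permutes the singular moduli of the proper $\mathcal{O}$-ideals according to the ideal class of $x$; this is the main theorem of complex multiplication for the $j$-invariant, which I would cite from \cite[Ch.\ 5]{Shimura}.

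The substantive case is $h=f_{\mathbf{a}}$. Here I would use the interpretation of $f_{\mathbf{a}}(\theta)$ as a Weierstrass coordinate of the $N$-torsion point $p_{\mathbf{a}}$ on the CM elliptic curve $E_\theta=\mathbb{C}/(\Z\theta+\Z)$ attached to the index $\mathbf{a}\in(\tfrac1N\Z/\Z)^2$. The definition of $g_\theta$ as the matrix of multiplication on $\hat{\Z}\theta+\hat{\Z}$ is engineered precisely so that $g_\theta(x)$ permutes the indices $\mathbf{a}$ by the same rule by which the idèle $x$ acts on the torsion module $\tfrac1N\mathcal{O}/\mathcal{O}$, while the induced action on the constant field $\Q(\zeta_N)$ matches through $\det g_\theta(x)$ and cyclotomic reciprocity. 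Comparing these two actions term by term reduces the equality to the assertion that $[x^{-1},K]$ carries $p_{\mathbf{a}}$ to $p_{x\cdot\mathbf{a}}$.

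The main obstacle is exactly this last arithmetic input: the Shimura--Taniyama description of the Galois action on the torsion points of the CM curve, which is what makes the diagram (\ref{classField1}) commute. Everything else is the bookkeeping of identifying the explicit matrix $g_\theta(x)$ with idèlic multiplication on $\hat{\Z}\theta+\hat{\Z}$ and with the fractional action on $\mathbb{H}$; the real content lies in the compatibility of the Artin map with the analytic idèlic parametrization of torsion, which I would take from \cite[Ch.\ 6]{Shimura} and its adelic reformulation in \cite{steven2}.
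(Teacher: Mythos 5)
Your proposal is correct and is, in substance, the standard proof of Shimura's reciprocity law: reduce to the generators $j$ and the Fricke functions of $\mathcal{F}$, settle the case of $j$ by the main theorem of complex multiplication, and settle the Fricke functions by the Shimura--Taniyama description of the Galois action on the torsion points of the CM curve. This is essentially the same route as the paper's, because the paper supplies no argument of its own: it quotes the theorem as a known result, deferring precisely to \cite[chapter\ 6]{Shimura} and \cite{steven2}, which is where the argument you sketch (including the arithmetic input you correctly identify as the crux) is carried out.
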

The following proposition will be useful for us
\begin{proposition} \label{refpro}
 If $\mathcal{F}/\Q(h)$ is Galois then 
\[
 h(\theta)^x=h(\theta) \Leftrightarrow h^{g_\theta(x)}=h.
\]
\end{proposition}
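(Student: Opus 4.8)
The plan is to feed both sides of the equivalence into the Shimura reciprocity theorem just stated and thereby reduce the claim to a statement about a single function. Interpreting the exponent $x$ as the Artin action, so that $h(\theta)^x$ is precisely the left-hand side $h(\theta)^{[x^{-1},K]}$ of the reciprocity formula, that theorem reads $h(\theta)^x = h^{g_\theta(x)}(\theta)$. Writing $f := h^{g_\theta(x)} \in \mathcal{F}$ (which has no pole at $\theta$ since $h$ does not), the equivalence to be proved becomes
\[
 f(\theta) = h(\theta) \iff f = h .
\]
So after the reciprocity step the entire content is that evaluation at the CM point $\theta$ does not identify the two conjugate functions $f$ and $h$ unless they are already equal.

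The direction $(\Leftarrow)$ is immediate and uses no hypothesis: if $f = h$ as elements of $\mathcal{F}$ then certainly $f(\theta) = h(\theta)$, and reading the reciprocity formula backwards gives $h(\theta)^x = h(\theta)$.

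For $(\Rightarrow)$ I would argue that distinct conjugates of $h$ stay distinct after specialising at $\theta$. Recall from the first diagram that $\mathcal{F}/\mathcal{F}_1 = \mathcal{F}/\Q(j)$ is Galois and that every element of $\mathcal{F}$ is algebraic over $\Q(j)$; hence $h$ has a minimal polynomial $P(j,X) \in \Q(j)[X]$ whose roots, all lying in $\mathcal{F}$, are the $\Q(j)$-conjugates of $h$, and $f$ is one of them. The hypothesis that $\mathcal{F}/\Q(h)$ is Galois guarantees that $\Q(h)$ is exactly the fixed field of $\mathrm{Gal}(\mathcal{F}/\Q(h))$, so that the orbit of $h$ and the field $\Q(h)$ determine one another. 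The goal is then to show that the specialised polynomial $P(j(\theta),X)$ is separable: granting this, $f(\theta) = h(\theta)$ forces the roots $f$ and $h$ to coincide and we are done. To obtain separability I would identify $P(j(\theta),X)$ with the minimal polynomial of $h(\theta)$ over $K(j(\theta))$, using Shimura reciprocity a second time: the formula $h(\theta)^{[x^{-1},K]} = h^{g_\theta(x)}(\theta)$ shows that $\mathrm{Gal}(K^{\mathrm{ab}}/K(j(\theta)))$ permutes the specialised roots through the $g_\theta$-action on the function-level conjugates, so the two Galois correspondences are intertwined by $g_\theta$ and the number of distinct values $h^{g_\theta(x)}(\theta)$ equals the number of distinct functions $h^{g_\theta(x)}$.

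The main obstacle is exactly this last point: the injectivity of evaluation at $\theta$ on the orbit of conjugates, equivalently the degree identity $[K(h(\theta)):K(j(\theta))] = [\Q(h):\Q(j)]$, i.e.\ the absence of any collapse of the class polynomial upon specialising $j \mapsto j(\theta)$. This is where the Galois hypothesis on $\mathcal{F}/\Q(h)$ is genuinely used, since it is what makes the two fixed-field correspondences match degree by degree under $g_\theta$. A technical point to handle with care is that for a general id\`ele $x \in J_K^f$ the matrix $g_\theta(x) \in \mathrm{GL}_2(A^f_\Q)$ need not fix $\Q(j)$ (only the unit part lands in $\mathrm{GL}_2(\hat{\Z})$, which fixes $\mathcal{F}_1$); restricting to the relevant group of unit id\`eles, or replacing $\Q(j)$ by the appropriate $g_\theta(x)$-stable base field, removes this difficulty.
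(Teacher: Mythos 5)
Your reduction via the reciprocity theorem and the direction $(\Leftarrow)$ are fine, but the direction $(\Rightarrow)$ --- which is the entire content of the proposition --- is not proved: the separability of the specialized polynomial $P(j(\theta),X)$, equivalently the injectivity of evaluation at $\theta$ on the set of conjugates $h^{g_\theta(x)}$, \emph{is} the statement being proved, and your justification for it (that the Galois hypothesis ``makes the two fixed-field correspondences match degree by degree under $g_\theta$'') is a paraphrase of the conclusion, not an argument. Reciprocity alone only gives a surjection from the orbit of functions onto the orbit of values, so any degree count based on it is circular --- as you half-concede when you call this ``the main obstacle.'' Note that the paper itself gives no argument here: its proof is the citation to Stevenhagen \cite[eq.\ (3.5)]{steven2}. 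The ingredient that proof rests on, and that is entirely absent from your proposal, is arithmetic rather than formal: the Main Theorem of complex multiplication (the values $f(\theta)$, $f\in\mathcal{F}_N$ finite at $\theta$, generate the ray class field $H_{N,\mathcal{O}}$ over $K$) together with the class-field-theoretic isomorphism $\mathrm{Gal}(H_{N,\mathcal{O}}/H_{\mathcal{O}})\cong(\mathcal{O}/N\mathcal{O})^*/\mathcal{O}^*$. When $\mathcal{O}^*=\{\pm 1\}$ these force $\left|\bar g_\theta\left((\mathcal{O}/N\mathcal{O})^*\right)\right| = [H_{N,\mathcal{O}}:H_\mathcal{O}]$, and only this equality of orders, combined with the fundamental inequality for residue degrees of the place $f\mapsto f(\theta)$, upgrades the trivial inclusion of stabilizers to the equality you need.

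That no formal argument from your stated hypotheses can close the gap is shown by an example in which everything you use holds and the conclusion fails. Take $K=\Q(i)$, $\theta=i$, and $h=\mathfrak{f}_1=\eta(\tau/2)/\eta(\tau)$ the Weber function, so that $\Q(j)\subset\Q(h)$ (since $j=(\mathfrak{f}_1^{24}+16)^3/\mathfrak{f}_1^{24}$) and hence $\mathcal{F}/\Q(h)$ is Galois; let $x$ be the id\`ele given by the global unit $i\in\mathcal{O}^*\subset\prod_p\mathcal{O}_p^*$. Being principal, $x$ has trivial Artin symbol, so $h(\theta)^x=h(\theta)$; but
\[
g_\theta(x)=\begin{pmatrix}0&-1\\ 1&0\end{pmatrix}
\]
acts by $\tau\mapsto -1/\tau$, whence $h^{g_\theta(x)}=\mathfrak{f}_2\neq\mathfrak{f}_1=h$. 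Here the distinct $\Q(j)$-conjugates $\mathfrak{f}_1,\mathfrak{f}_2$ take the same value $2^{1/8}$ at $\theta=i$: exactly the collapse your separability claim is supposed to exclude. So the proposition genuinely requires $\mathcal{O}^*=\{\pm1\}$ (i.e.\ $D<-4$, automatic for the discriminants of this paper), and it also requires $x$ to range over $\prod_p\mathcal{O}_p^*$ rather than over $J_K^f$: for arbitrary id\`eles it fails even for $h=j$, because principal id\`eles have trivial Artin symbol while $g_\theta(K^*)$ does not fix $j$. Your closing remark treats this restriction as a removable technicality; it is in fact a necessary hypothesis, and even after imposing it the arithmetic input above still has to be invoked.
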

\begin{proof}
 See \cite[eq.\ (3.5)]{steven2}.
\end{proof}
From now on we will focus on functions $h \in \mathcal{F}$ such that 
$\mathcal{F}/\Q(h)$ is Galois. Notice that if $\Q(j) \subset \Q(h)$ then 
$\mathcal{F}/\Q(h)$ is Galois since $\mathcal{F}/\Q(j)$ is.

We have the following tower of fields:
\[
 \xymatrix{
K^{\mathrm{ab}} \ar@{-}[d]^{H_1} \ar@{-}@/_2pc/[dd]_{H} \ar@{-}@/^3pc/[ddd]^{G} \\
K(h(\theta)) \ar@{-}[d] \\
K(j(\theta)) \ar@{-}[d]_{\mathrm{Cl}(\mathcal{O})} \\
K
}
\]
where $H=\mathrm{Gal}(K^{\mathrm{ab}}/K(j(\theta)))$, 
$G=\mathrm{Gal}(K^{\mathrm{ab}}/K)$, $H_1=\mathrm{Gal}(K^{\mathrm{ab}}/K(h(\theta))$
and $G/H\cong \mathrm{Cl}(\mathcal{O})$, where $\mathrm{Cl}(\mathcal{O})$ denotes the 
class group of the order $\mathcal{O}$.
We now form  the following short exact sequence:
\begin{equation} \label{ses1}
 1 \rightarrow \frac{H}{H_1} \rightarrow \frac{G}{H_1} \rightarrow \frac{G}{H} \rightarrow 1.
\end{equation}
Notice that $H/H_1 \cong \mathrm{Gal}(K(h(\theta))/K(j(\theta)))$.

Suppose that $h(\theta)$ is an algebraic integer. 
The class group of $\mathcal{O}$ is identified with  the set of primitive forms $[a,b,c]$ of discriminant $D$.
We also set $\tau_{[a,b,c]}=\frac{-b+\sqrt{d}}{2a}$. 
%The Shimura reciprocity law allows  us to compute the action of $[a,b,c]$ on $h(\theta)$.
The proposition \ref{minpoly1}  will provide us  a method to 
compute its minimal polynomial in  $\Z[x]$.

For every element $[a,b,c] \in \mathrm{Cl}(\mathcal{O})=G/H$  we fix a 
representative $\sigma_{[a,b,c]} \in G$ such that $[a,b,c]=\sigma_{[a,b,c]} H$. Notice that 
the selection of the representative does not matter when one is acting on 
$K(j(\theta))=(K^{\mathrm{ab}})^H$ since $H$ acts trivially on $K(j(\theta))$.

The situation changes if  we try to act with $\sigma_{[a,b,c]}$ on the field $K(h(\theta))$
which is the fixed field of $H_1$ with $H_1 < H$. The class 
$\sigma_{[a,b,c]} H$ gives rise to $[H:H_1]$ classes in $G/H_1$, 
namely $\sigma_{[a,b,c]} \sigma_i H_1$, where $\sigma_1,\ldots,\sigma_s$ 
are some  coset representatives of $H_1$ in $H$ and $s=[H:H_1]$.
The action of the representative  
$\sigma_{[a,b,c]} \sigma_i=\sigma_i \sigma_{[a,b,c]}$ on $K(h(\theta))$ is 
now well defined.  Notice also that when $[a,b,c]$ runs over $G/H$ and $i$ runs over 
$1,\ldots,s$  then 
$\sigma_i \sigma_{[a,b,c]}$ runs over $G/H_1$.

\begin{proposition} \label{minpoly1}
Assume that $h(\theta) \in \mathbb{R}$ and $h(\theta)$ is algebraic. 
 Let $H_1$ be the  subgroup of $G$ that stabilizes the field $K(h(\theta))$ and let 
$H$ be the  subgroup corresponding to the ring  class field $K(j(\theta))$ of $K$. 
We consider the elements $h(\theta)^{\sigma_i\sigma_{[a,b,c]}}$. 
The polynomial
\begin{equation}\label{poldef22}
 p_{h(\theta)}:=\prod_{i=1}^s \prod_{[a,b,c] \in \mathrm{Cl}(\mathcal{O})} 
\left(
x- \left(h(\theta)^{\sigma_i \sigma_{[a,b,c]}}\right)
\right)
\end{equation}
is a polynomial in $\Z[x]$.
\end{proposition}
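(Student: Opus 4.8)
The plan is to recognize $p_{h(\theta)}$ as the minimal polynomial of $h(\theta)$ over $K$, and then to descend from $K$ to $\mathbb{Q}$ using that $h(\theta)$ is real while $K$ is imaginary quadratic.

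First I would use the remark preceding the statement: as $i$ runs over $1,\dots,s$ and $[a,b,c]$ runs over $\mathrm{Cl}(\mathcal{O})=G/H$, the elements $\sigma_i\sigma_{[a,b,c]}$ run over a full set of coset representatives of $H_1$ in $G$. Since $H_1$ fixes $K(h(\theta))$ pointwise, the value $h(\theta)^{\sigma}$ depends only on the coset $\sigma H_1$, so the product in (\ref{poldef22}) is well defined and equals $\prod_{\sigma H_1\in G/H_1}\bigl(x-h(\theta)^{\sigma}\bigr)$. The extension $K(h(\theta))/K$ is abelian, hence Galois, being a subextension of $K^{\mathrm{ab}}/K$, with Galois group $G/H_1$; and $h(\theta)$ is by construction a primitive element. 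Therefore the conjugates $h(\theta)^{\sigma}$ are pairwise distinct as $\sigma H_1$ ranges over $G/H_1$, and $p_{h(\theta)}$ is precisely the minimal polynomial of $h(\theta)$ over $K$. In particular its coefficients, being elementary symmetric functions of a Galois orbit, are fixed by $G/H_1=\mathrm{Gal}(K(h(\theta))/K)$ and hence lie in $K$.

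Next I would descend to $\mathbb{Q}$. Since $h(\theta)\in\mathbb{R}$, the field $\mathbb{Q}(h(\theta))$ is contained in $\mathbb{R}$, whereas $K$ is imaginary quadratic, so $K\not\subseteq\mathbb{R}$ and $K\cap\mathbb{R}=\mathbb{Q}$. Consequently $K\cap\mathbb{Q}(h(\theta))=\mathbb{Q}$, and because $K/\mathbb{Q}$ is Galois this forces $K$ and $\mathbb{Q}(h(\theta))$ to be linearly disjoint over $\mathbb{Q}$; equivalently $[K(h(\theta)):K]=[\mathbb{Q}(h(\theta)):\mathbb{Q}]$. Thus the minimal polynomial of $h(\theta)$ over $K$ and its minimal polynomial over $\mathbb{Q}$ have the same degree, and since the former divides the latter in $K[x]$ while both are monic, they coincide. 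Hence $p_{h(\theta)}\in\mathbb{Q}[x]$.

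Finally, I would invoke the standing hypothesis that $h(\theta)$ is an algebraic integer: its minimal polynomial over $\mathbb{Q}$ is monic with coefficients in $\mathbb{Z}$, so $p_{h(\theta)}\in\mathbb{Z}[x]$, as claimed. The only genuinely delicate point is the first paragraph — making sure that the double product really ranges over $G/H_1$ without repetition and that $H_1$ fixes $h(\theta)$, so that $p_{h(\theta)}$ genuinely is the minimal polynomial over $K$ rather than a proper power or a partial product; the rest is the standard reality-plus-imaginary-quadratic descent. An alternative to the linear-disjointness step, which avoids invoking primitivity, is to argue that complex conjugation (which preserves $K^{\mathrm{ab}}$ since $K/\mathbb{Q}$ is Galois and $K(h(\theta))=K(\overline{h(\theta)})$) permutes the root multiset $\{h(\theta)^{\sigma}\}$, so the coefficients are real as well as lying in $K$, hence in $\mathbb{Q}$.
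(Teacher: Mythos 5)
Your proof is correct and follows essentially the same route as the paper: first identify the product, taken over $G/H_1\cong\mathrm{Gal}(K(h(\theta))/K)$, as the minimal polynomial of $h(\theta)$ over $K$, then use the reality of $h(\theta)$ against the imaginarity of $K$ to conclude $\mathrm{Gal}(\mathbb{Q}(h(\theta))/\mathbb{Q})\cong\mathrm{Gal}(K(h(\theta))/K)$, and finish by integrality of $h(\theta)$. The paper packages your linear-disjointness step into a field-tower diagram with ``lifts of $\mathrm{Gal}(K/\mathbb{Q})$'', so the two arguments coincide in substance; yours just makes the descent from $K$ to $\mathbb{Q}$ explicit.
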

\begin{proof}
We have already observed that the product in eq. (\ref{poldef22}) runs over all elements in 
$\mathrm{Gal}(K(h(\theta))/K(j(\theta))$. 
We have the following tower of field extensions
\[
 \xymatrix{
& K(h(\theta)) \ar@{-}[dl]_{H/H_1} \ar@{-}[dr]^{G_1}  & \\
K(j(\theta)) \ar@{-}[d]^{\mathrm{Cl}(\mathcal{O})} \ar@{-}[drr]^{G_2} & &
 \mathbb{Q}(h(\theta)) \ar@{-}[d]^{H/H_1} \\
K \ar@{-}[dr]_{\mathrm{Gal}(K/\mathbb{Q})} & & \mathbb{Q}(j(\theta)) 
 \ar@{-}[dl]^{\mathrm{Cl}(\mathcal{O})} \\
& \mathbb{Q} & 
}
\]
where $G_1,G_2$ are lifts of $\mathrm{Gal}(K/\mathbb{Q})$.
From the diagram above we deduce that  
 $\mathrm{Gal}(\mathbb{Q}(h(\theta))/\mathbb{Q})=
\mathrm{Gal}(K(h(\theta))/K)$.
This proves that  
 the   polynomial $p_{h(\theta)}$ defined in Eq.  (\ref{poldef22}) is the defining polynomial 
of the extension $\mathbb{Q}(h(\theta))/\mathbb{Q}$. Moreover the coefficients of  $p_{h(\theta)}$
 are algebraic integers in $\Q$ 
therefore $p_{h(\theta)}\in \Z[x]$. 
\end{proof}

\begin{remark}
 The assumption $h(\theta)\in \mathbb{R}$ is essential as one 
sees in section \ref{3mod24}, where we compute the minimal 
polynomial of the class invariant $\mathfrak{g}_2^6(\theta)$.
\end{remark}

The above construction becomes practical if $h \in \mathcal{F}_N$ is a 
modular function of level $N$. Then the  value $h(\theta)$ is known to be
inside the ray class field modulo $N$ and the action of $\hat{\mathcal{O}}^*$ 
can be computed in terms of a finite quotient $(\mathcal{O}/N \mathcal{O})^*$.
Here it is important to assume also that $\Q(j) \subset \Q(h)$ so proposition \ref{refpro}
is applicable.
More precisely we can replace Eq.  (\ref{classField1}) with the exact sequence:
\[
 \xymatrix{
\mathcal{O}^* \ar[r] & (\mathcal{O}/N \mathcal{O})^* \ar[r]  \ar[d]_{\bar{g}_\theta} &
 \mathrm{Gal}\big(K(\mathcal{F}_N(\theta))/K(j(\theta))\big) \ar[r] & 1\\
\{\pm 1\} \ar[r] & \mathrm{GL}_2(\Z/N\Z) \ar[r] & \mathrm{Gal}(\mathcal{F}_N/\mathcal{F}_1) \ar[r] & 1,
}
\]
where we have considered the reduction of all rings and maps  modulo $N$.
The strategy for the computations is the following:
compute generators $x_1,\ldots,x_k$ for the group $(\mathcal{O}/N \mathcal{O})^*$ and map them to 
$\mathrm{GL}_2(\Z/N\Z)$ using $\bar{g}_\theta$. If each matrix $g(x_i)\in \mathrm{GL}_2(\Z/N\Z)$ 
acts trivially on $h$ then $h(\theta)$ is a class invariant. If not we can consider the subgroup 
$A \subset \bar{g}_\theta \left((\mathcal{O}/N \mathcal{O})^* \right)$ that 
acts trivially on $h$. The Galois group of $K(h(\theta))/K(j(\theta))$ equals 
\[
 \mathrm{Gal}(K(h(\theta))/K(j(\theta))=
\frac{(\mathcal{O}/N \mathcal{O})^*/\mathcal{O}^*}{\bar{g}_\theta^{-1} (A)}.
\]
We will now give an applicable approach to proposition \ref{minpoly1} by working modulo $N$. 
Following the article of A. Gee \cite[Eq.\  17]{GeeBordeaux} we give the next  definition.
This will allow us  to compute the action of the images of generators of 
$G_{72}$ on the modular functions of level $72$. 
\begin{definition}
Let $N \in \mathbb{N}$ and 
  $[a,b,c]$ be a representative of the equivalence class of an element in the class group.
Let $p$ be a prime number and  $p^r$ be the maximum power of $p$  that divides $N$.
Assume that the discriminant $D=b^2-4ac \equiv 1 \bmod 4$.
The following matrix definition is motivated by the explicit 
writing of the id\`ele that  locally generates $[a,b,c]$ for all primes 
$p$, see \cite[sec.\ 4]{GeeStevenhagen}.
Define the matrix
\[
 A_{[a,b,c],p^r}=
\left\{
\begin{array}{ll}
\left(
\begin{array}{cc}
 a & \frac{b-1}{2} \\
0 & 1 
\end{array}
\right)
& 
\mbox{ if } p\nmid a \\
\left(
\begin{array}{cc}
 \frac{-b-1}{2} & -c \\
1 & 0 
\end{array}
\right)
& 
\mbox{ if } p\mid a \mbox{ and } p\nmid c\\
\left(
\begin{array}{cc}
 \frac{-b-1}{2}-a  & \frac{1-b}{2}-c  \\
1 & -1 
\end{array}
\right)
& 
\mbox{ if } p\mid a \mbox{ and } p \mid c.

\end{array}
\right.
\]
The Chinese remainder theorem implies that 
\[
 \mathrm{GL}_2(\mathbb{Z}/N\mathbb{Z}) \cong \prod_{p \mid N} 
\mathrm{GL}_2(\mathbb{Z}/p^r \mathbb{Z}).
\]
We define $A_{[a,b,c]}$ as  the unique element in $
\mathrm{GL}_2(\mathbb{Z}/N\mathbb{Z})$ that it is mapped 
to $A_{[a,b,c],p^r}$  modulo $p^r$ for all $p\mid N$.
This matrix $A_{[a,b,c]}$ can be written  uniquely as a product 
\begin{equation} \label{prodexp}
A_{[a,b,c]}=
B_{[a,b,c]}
\left( 
\begin{array}{cc}
1 & 0 \\
0 & d_{[a,b,c]}
\end{array}
\right),
\end{equation}
where $d_{[a,b,c]}=\det A_{[a,b,c]}$ and $B_{[a,b,c]}$ is a matrix with determinant $1$.
We will denote by $\sigma_{d_{[a,b,c]}}$ the automorphism of $\mathbb{Q}(\zeta_N)$ 
sending $\zeta_N \mapsto \zeta_N^{d_{[a,b,c]}}$. 
\end{definition}

Let $\lambda \in \Q(\zeta_{N})$.
Shimura reciprocity law gives us \cite[lemma.\ 20]{GeeBordeaux}  the action of 
 $[a,b,c] $    on $\lambda h(\theta)$ for $\theta=1/2+i \sqrt{n}/{2}$: 
\[
  \big( \lambda h(\theta)\big) ^{[a,-b,c]}= \lambda^{\sigma_{d_{[a,b,c]}}} 
h\left( 
\frac{\alpha_{[a,b,c]} \tau_{[a,b,c]}+ \beta_{[a,b,c]} }
{\gamma_{[a,b,c]} \tau_{[a,b,c]} + \delta_{[a,b,c]} }
\right)^{\sigma_{d_{[a,b,c]}}},
\]
where
 $\begin{pmatrix} \alpha_{[a,b,c]} & \beta_{[a,b,c]} \\ \gamma_{[a,b,c]} & \delta_{[a,b,c]} \end{pmatrix}=A_{[a,b,c]}$
and $\tau_{[a,b,c]}$ is the (complex) root of $az^2 +bz+c$ with positive imaginary part.

%  
% By lemma 19 in \cite{} we know how to construct an id\`ele $z_{\tau_{[a,b,c]}}$ such that 
% \[
%  j(\theta)^{[a,b,c]}=j(\theta)^{[z_{\tau_{[a,b,c]}},K]}.
% \]
% % namelly for each rational  prime $p$ we construct 
% % \[
% %  \z_p:=
% % \left\{
% % \begin{array}{ll}
% % a & \mbox{ if } p \nmid a\\
% % a\tau &  
% % \end{array}
% % \right.
% % \]
 
\begin{theorem} \label{shimura1}
Let $\mathcal{O}=\mathbb{Z}[\theta]$ be an order  of the imaginary quadratic field $K$,
and assume that $x^2+Bx+C$ is the minimal polynomial of $\theta$.
Let $N>1$ be a natural number, $x_1,\ldots,x_r$ be  generators of the abelian group $\left(\mathcal{O}/N \mathcal{O} \right)^*$ and
$\alpha_i+\beta_i\theta \in \mathcal{O}$ be a representative of the class of the generator $x_i$. 
For each representative we consider 
 the matrix:
\[A_i:=\begin{pmatrix} \alpha_i-B\beta_i & -C \beta_i \\ \beta_i & \alpha_i \end{pmatrix}.\]
If $f$ is a modular function of level $N$ and if  for all matrices $A_i$ it holds that
\begin{equation} \label{act123}
f(\theta)=f^{A_i}(\theta), \mbox{ and } \mathbb{Q}(j) \subset \mathbb{Q}(f)
\end{equation}
then $f(\theta)$ is a class invariant.
\end{theorem}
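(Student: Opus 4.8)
The plan is to recognise each matrix $A_i$ as the image $\bar g_\theta(x_i)$ of the corresponding generator and then to use that a homomorphism is determined by its values on a generating set. First I would check the identification: writing the representative $x_i=\alpha_i+\beta_i\theta$ in the shape $s\theta+t$ with $s=\beta_i$ and $t=\alpha_i$, and substituting into the formula $g_\theta(s\theta+t)=\left(\begin{smallmatrix} t-Bs & -Cs\\ s & t\end{smallmatrix}\right)$, recovers precisely the matrix $A_i=\left(\begin{smallmatrix}\alpha_i-B\beta_i & -C\beta_i\\ \beta_i & \alpha_i\end{smallmatrix}\right)$ of the statement. Reducing modulo $N$ as described above, $A_i$ is therefore the element $\bar g_\theta(x_i)\in\mathrm{GL}_2(\Z/N\Z)$ through which the class of $x_i$ acts on the level $N$ function field $\mathcal F_N$.

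Next I would exploit multiplicativity of $\bar g_\theta$. Because $x_1,\dots,x_r$ generate $(\mathcal O/N\mathcal O)^*$, their images $A_1,\dots,A_r$ generate the subgroup $\bar g_\theta\big((\mathcal O/N\mathcal O)^*\big)\subset\mathrm{GL}_2(\Z/N\Z)$. The matrices of $\mathrm{GL}_2(\Z/N\Z)$ that fix $f$, i.e. the reduction modulo $N$ of $\mathrm{Stab}_{\Q(f)}$, form a subgroup; hence if every generator $A_i$ fixes $f$, then the entire image $\bar g_\theta\big((\mathcal O/N\mathcal O)^*\big)$ fixes $f$. By the level $N$ form of the Gee--Stevenhagen criterion recorded above in the exact sequence involving $\bar g_\theta$, triviality of the action of this whole image on $f$ is exactly the statement that $f(\theta)$ is a class invariant. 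Thus the proposition reduces to verifying that each $A_i$ fixes $f$ as a modular function.

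The step I expect to be the real obstacle is precisely this reduction, because the hypothesis (\ref{act123}) is an equality $f(\theta)=f^{A_i}(\theta)$ of complex numbers, while the criterion asks for the equality $f^{A_i}=f$ of functions. To bridge them I would invoke Proposition \ref{refpro}, whose hypothesis that $\mathcal F/\Q(f)$ be Galois is guaranteed exactly by the second part of (\ref{act123}), namely $\Q(j)\subset\Q(f)$; it yields $f(\theta)^{x_i}=f(\theta)\Leftrightarrow f^{g_\theta(x_i)}=f$. The Shimura reciprocity theorem stated above turns the value hypothesis into $f(\theta)^{[x_i^{-1},K]}=f^{g_\theta(x_i)}(\theta)=f(\theta)$, so the Artin symbol of $x_i$ fixes $f(\theta)$, and Proposition \ref{refpro} upgrades this to $f^{A_i}=f$. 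Only two conventions need care, and they are ultimately harmless: the inversion $x\mapsto x^{-1}$ linking (\ref{int-action}) to the action of $A_i$, and the factor $\{\pm1\}=\mathcal O^*$ sitting in both exact sequences, which never obstructs anything since $\pm1$ maps to the identity automorphism of $\mathcal F_N$ and hence lies in every stabilizer.
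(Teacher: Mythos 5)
Your argument is correct, but it is not the paper's argument in any literal sense: the paper proves Theorem \ref{shimura1} by citation alone, deferring to \cite[Cor.\ 4]{GeeBordeaux} for the maximal-order case and \cite[section\ 5]{steven2} for general orders, whereas you reconstruct the proof from the machinery the paper assembles in Section 2. Your reconstruction matches what those references actually do, and it is organized around the right three steps: the identification $A_i=\bar g_\theta(x_i)$ coming from the explicit matrix formula for $g_\theta$; the reduction to generators, valid because the stabilizer of $f$ is a subgroup of $\mathrm{GL}_2(\Z/N\Z)$ and the action of $g_\theta\left(\prod_p\mathcal{O}_p^*\right)$ on a level-$N$ function factors through reduction modulo $N$; and --- the real content --- the passage from the value identity in (\ref{act123}) to the function identity $f^{A_i}=f$ via Shimura reciprocity combined with Proposition \ref{refpro}, whose Galois hypothesis is exactly what $\Q(j)\subset\Q(f)$ supplies (as the paper itself remarks right after Proposition \ref{refpro}). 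This correctly explains why the second condition in (\ref{act123}) cannot be dropped. What your route buys is a self-contained proof in which one sees where each hypothesis enters; what the paper's route buys is brevity, at the cost of hiding precisely the value-versus-function subtlety you identify as the crux.

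One point you should tighten: the reciprocity theorem and Proposition \ref{refpro} are statements about id\`eles, while your $x_i$ live in the finite quotient $(\mathcal{O}/N\mathcal{O})^*$. To apply them, lift $x_i$ to some $\tilde x_i\in\prod_p\mathcal{O}_p^*$ (for instance, the representative $\alpha_i+\beta_i\theta$ at the places dividing $N$ and $1$ elsewhere), and note that $g_\theta(\tilde x_i)$ acts on $\mathcal{F}_N$ through its reduction $A_i$. Then the hypothesis $f^{A_i}(\theta)=f(\theta)$ reads $f(\theta)^{[\tilde x_i^{-1},K]}=f(\theta)$, Proposition \ref{refpro} upgrades this to $f^{g_\theta(\tilde x_i)}=f$, hence $f^{A_i}=f$, and since the $A_i$ generate the image of $\bar g_\theta$ the criterion quoted at the start of Section 2 (\cite[Cor.\ 3]{GeeBordeaux}) yields that $f(\theta)$ is a class invariant. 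This is exactly the bridge you describe, just stated at the id\`elic level where the quoted results actually live; with that adjustment your proof is complete.
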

\begin{proof}
\cite[Cor.\ 4]{GeeBordeaux} for the maximal order case and \cite[section\ 5]{steven2}
for the general case. 
\end{proof}

%  \begin{definition} \label{shimura1}
%  Let $\mathcal{O}=\mathbb{Z}[\theta]$ be the ring of algebraic integers of the imaginary quadratic field $K$,
%  and assume that $x^2+Bx+C$ is the minimal polynomial of $\theta$.
%  Let $N>1$ be a natural number, $x_1,\ldots,x_r$ be  generators of the abelian group $\left(\mathcal{O}/N \mathcal{O} \right)^*$ and
%  $\alpha_i+\beta_i\theta \in \mathcal{O}$ be a representative of the class of the generator $x_i$. 
%  For each representative we consider 
%   the matrix:
%  \[A_i:=\begin{pmatrix} \alpha_i-B\beta_i & -C \beta_i \\ \beta_i & \alpha_i \end{pmatrix}.\]
%   \end{definition}
 
%
%
%
\section{Ramanujan Invariants} \label{sec3}
We would like to find the minimal polynomial in $\Z[x]$ of the Ramanujan invariants 
$H_n=27/t_n^{12}$ for values $n\equiv 19 \bmod 24$.
In \cite{KoKo} the authors introduced the modular functions 
$R,R_1,\ldots, R_5$ of level $N=72$ in order to study $t_n$.
P. Stevenhagen pointed to us that the functions $R_i$ can be expressed 
in terms of the generalized Weber functions $\mathfrak{g}_0,\mathfrak{g}_1,\mathfrak{g}_2,\mathfrak{g}_3$
 defined in the 
 work of A. Gee in \cite[p.\ 73]{GeePhD}
as 
\[
 \mathfrak{g_0}(\tau)=\frac{\eta(\frac{\tau}{3})}{\eta(\tau)},\;
 \mathfrak{g_1}(\tau)=\zeta_{24}^{-1}\frac{\eta(\frac{\tau+1}{3})}{\eta(\tau)},\;
\mathfrak{g_2}(\tau)=\frac{\eta(\frac{\tau+2}{3})}{\eta(\tau)},\;
\mathfrak{g_3}(\tau)=\sqrt{3}\frac{\eta(3\tau)}{\eta(\tau)},
\]
where $\eta$ denotes the Dedekind eta function:
\[
 \eta(\tau)=e^{2\pi i \tau/24} \prod_{n\geq 1}(1-q^n)\;\; \tau \in \mathbb{H}, q=e^{2\pi i \tau}.
\]
\begin{proposition} \label{fund-ref}
 The functions $\frak{g}_i^{12}$ satisfy the polynomial:
\[
 X^4+36 X^3+270 X^2+(756-j) X+3^6-0.
\]
In particular $\Q(h)\subset \Q(\frak{g}_i)$ and $\mathcal{F}/\Q(\frak{g}_i)$ is Galois.
\end{proposition}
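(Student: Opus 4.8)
The plan is to realise the quartic as the minimal polynomial of $\mathfrak{g}_i^{12}$ over $\Q(j)$ by exhibiting $\mathfrak{g}_0^{12},\mathfrak{g}_1^{12},\mathfrak{g}_2^{12},\mathfrak{g}_3^{12}$ as a single $\mathrm{SL}_2(\Z)$-orbit, so that their elementary symmetric functions are $\mathrm{SL}_2(\Z)$-invariant and hence rational in $j$. First I would record the transformation behaviour of $\eta$ under the generators $T=\left(\begin{smallmatrix}1&1\\0&1\end{smallmatrix}\right)$ and $S=\left(\begin{smallmatrix}0&-1\\1&0\end{smallmatrix}\right)$, namely $\eta(\tau+1)=\zeta_{24}\eta(\tau)$ and $\eta(-1/\tau)=\sqrt{-i\tau}\,\eta(\tau)$. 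Substituting these into the defining $\eta$-quotients shows that $T$ cyclically permutes $\mathfrak{g}_0,\mathfrak{g}_1,\mathfrak{g}_2$ (via $(\tau+k)/3\mapsto(\tau+k+1)/3$) and fixes $\mathfrak{g}_3$, while $S$ interchanges $\eta(\tau/3)$ with $\sqrt{3}\,\eta(3\tau)$ and hence swaps $\mathfrak{g}_0$ with $\mathfrak{g}_3$. Each move multiplies the individual $\mathfrak{g}_i$ by a root of unity, and the whole point of raising to the twelfth power is that these roots of unity disappear; for instance $\mathfrak{g}_3(\tau+1)=\zeta_{24}^{2}\mathfrak{g}_3(\tau)$ gives $\mathfrak{g}_3^{12}(\tau+1)=\mathfrak{g}_3^{12}(\tau)$. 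Thus $T$ and $S$ genuinely permute the set $\{\mathfrak{g}_i^{12}\}$, and since $T$ supplies the $3$-cycle on $\{0,1,2\}$ and $S$ the transposition $0\leftrightarrow 3$, the action is transitive.

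Granting the permutation action, the elementary symmetric polynomials $e_1,\dots,e_4$ in the $\mathfrak{g}_i^{12}$ are invariant under $\mathrm{SL}_2(\Z)$. Because $\eta$ is holomorphic and nonvanishing on $\mathbb{H}$, each $\mathfrak{g}_i^{12}$, and hence each $e_m$, is holomorphic on $\mathbb{H}$, so the only possible singularities lie at the cusp. I would then compute the $q$-expansions from $\eta(\tau)=q^{1/24}\prod_{n\ge 1}(1-q^n)$: the functions $\mathfrak{g}_0^{12},\mathfrak{g}_1^{12},\mathfrak{g}_2^{12}$ begin with $q^{-1/3}$ whose leading coefficients are the three cube roots of unity $1,\zeta_3,\zeta_3^{2}$, while $\mathfrak{g}_3^{12}$ begins with $3^{6}q$. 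In forming $e_1,\dots,e_4$ the fractional powers cancel because $1+\zeta_3+\zeta_3^{2}=0$ (more precisely, the $q^{1/3}$-expansions of $\mathfrak{g}_0^{12},\mathfrak{g}_1^{12},\mathfrak{g}_2^{12}$ are conjugate under $q^{1/3}\mapsto\zeta_3 q^{1/3}$), confirming that every $e_m$ is an honest $q$-series in integer powers, that is, a modular function for $\mathrm{SL}_2(\Z)$ meromorphic only at $\infty$, and therefore a polynomial in $j$.

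Reading off the pole orders at the cusp, I expect $e_1,e_2,e_4$ to be holomorphic at $\infty$ (hence constant) and $e_3$ to have a single simple pole (hence of the form $j+\mathrm{const}$, since the dominant contribution is $\mathfrak{g}_0^{12}\mathfrak{g}_1^{12}\mathfrak{g}_2^{12}\sim q^{-1}$). Matching a short initial segment of the Fourier coefficients should yield $e_1=-36$, $e_2=270$, $e_3=j-756$ and $e_4=3^{6}$, whence $\mathfrak{g}_i^{12}$ is a root of $X^{4}-e_1X^{3}+e_2X^{2}-e_3X+e_4=X^{4}+36X^{3}+270X^{2}+(756-j)X+3^{6}$, which is the asserted polynomial. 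For the final clause, solving the quartic for $j$ gives $j=X^{-1}\bigl(X^{4}+36X^{3}+270X^{2}+756X+3^{6}\bigr)$ with $X=\mathfrak{g}_i^{12}$, so $j\in\Q(\mathfrak{g}_i^{12})\subseteq\Q(\mathfrak{g}_i)$ and thus $\Q(j)\subset\Q(\mathfrak{g}_i)$. Since $\mathcal{F}/\mathcal{F}_1=\mathcal{F}/\Q(j)$ is Galois, the intermediate extension $\mathcal{F}/\Q(\mathfrak{g}_i)$ is Galois as well.

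The main obstacle I anticipate is the bookkeeping of the roots of unity in the first step, namely verifying that the twelfth powers are permuted exactly with no residual $\zeta_{24}$ or $\sqrt{3}$ surviving, together with computing enough Fourier coefficients to pin down the four constants; the most delicate of these is $e_4=3^{6}$, which amounts to the product identity $\prod_{i=0}^{3}\mathfrak{g}_i^{12}=3^{6}$ and ultimately to a classical multiplication formula for the values $\eta\bigl((\tau+k)/3\bigr)$. Alternatively, one could bypass the coefficient matching by invoking the known relation between $j$ and the Hauptmodul $(\eta(3\tau)/\eta(\tau))^{12}$ of $\Gamma_0(3)$ and substituting $\mathfrak{g}_3^{12}=3^{6}\,(\eta(3\tau)/\eta(\tau))^{12}$.
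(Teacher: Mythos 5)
Your argument is correct, but it is genuinely different from what the paper does: the paper's entire ``proof'' of this proposition is a citation to the classical literature (Weber, p.~255, and eq.~5, p.~73 of Gee's thesis), whereas you reconstruct the classical argument itself. Your route --- showing via the $\eta$-transformation formulas that $T$ acts on the twelfth powers as the $3$-cycle $(\mathfrak{g}_0^{12},\mathfrak{g}_1^{12},\mathfrak{g}_2^{12})$ and $S$ as the permutation interchanging $\mathfrak{g}_0^{12}\leftrightarrow\mathfrak{g}_3^{12}$ (and, though you only mention the $0\leftrightarrow 3$ swap, also $\mathfrak{g}_1^{12}\leftrightarrow\mathfrak{g}_2^{12}$, which you should record to justify that the set is genuinely permuted), so that the elementary symmetric functions $e_1,\dots,e_4$ are $\mathrm{SL}_2(\Z)$-invariant, holomorphic on $\mathbb{H}$, with fractional $q$-powers cancelling by the $q^{1/3}\mapsto\zeta_3 q^{1/3}$ conjugacy, hence polynomials in $j$ of the degrees dictated by the cusp behaviour --- is exactly the standard proof underlying the cited result, and your identification $e_1=-36$, $e_2=270$, $e_3=j-756$, $e_4=3^6$ matches the paper's quartic (note $e_4=3^6$ is the identity $\mathfrak{g}_0\mathfrak{g}_1\mathfrak{g}_2\mathfrak{g}_3=\sqrt 3$ that the paper invokes separately in its Lemma on $R_2R_4$). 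Your deduction of the final clause (solve the quartic for $j$ to get $\Q(j)\subset\Q(\mathfrak{g}_i)$, then use that $\mathcal{F}/\Q(j)$ is Galois to conclude $\mathcal{F}/\Q(\mathfrak{g}_i)$ is Galois) is the same observation the paper makes just before the proposition; here too you read the statement's $\Q(h)$ as the intended $\Q(j)$, which is the only sensible interpretation. What your approach buys is self-containedness, at the cost of the root-of-unity bookkeeping and the Fourier-coefficient matching you flag at the end; what the paper's approach buys is brevity, by outsourcing precisely that bookkeeping to Weber and Gee.
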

\begin{proof}
This is a classical result see
\cite[eq.\ 5 p.\ 73]{GeePhD}, \cite[p.\ 255]{Weber}.
\end{proof}

Here  will need only the $R_2(\tau)$ and $R_4(\tau)$ defined by: 
\begin{equation} \label{R2}
R_2(\tau)= \frac{\eta(3\tau)\eta(\tau/3+2/3)}{ \eta^2(\tau)}=
\sqrt{3}^{-1}\mathfrak{g}_2(\tau)\mathfrak{g}_3(\tau)
\end{equation}

\begin{equation} \label{R4}
R_4(\tau)= \frac{\eta(\tau/3)\eta(\tau/3+1/3)}{ \eta^2(\tau)}=
\zeta_{24}\mathfrak{g}_0(\tau)\mathfrak{g}_1(\tau).
\end{equation}
The six modular functions $R_i$ defined in 
 \cite{KoKo} 
correspond to the $\binom{4}{2}=6$ different products
$\mathfrak{g}_i\mathfrak{g}_j$ we can make from $\mathfrak{g}_i,$ $i=0,\ldots,3$.

The Ramanujan value can be expressed in terms of the above modular functions as  
\begin{equation} \label{gg}
 t_n=\sqrt{3}R_2\left( -\frac{1}{2}+i \frac{\sqrt{n}}{2} \right)=(\mathfrak{g}_2\mathfrak{g}_3)
\left(  -\frac{1}{2}+i \frac{\sqrt{n}}{2}\right).
\end{equation}
Notice also that $\sqrt{3}=\zeta_{72}^6-\zeta_{72}^{30}$.
The Ramanujan invariants for $D \equiv 5 \bmod 24$ are
\[
 H_n:=\frac{27}{t_n^{12}}
\]
 and we also define the values
\[
 A_n:=H_n+\frac{1}{H_n}=\frac{27}{t_n^{12}}+\frac{t_n^{12}}{27}.
\]

Denote by $S$ the involution $\tau \mapsto -\frac{1}{\tau}$ and by $T$ the map 
$\tau \mapsto \tau+1$. The elements $S,T$ generate the group $\mathrm{SL}(2,\Z)$. 
We will use the following
\begin{lemma}
 The action of  $S:z\mapsto -1/z$ on $\mathfrak{g}_i$ is given by 
\[
(\mathfrak{g}_0,\mathfrak{g}_1,\mathfrak{g}_2,\mathfrak{g}_3) 
 \begin{pmatrix}
  0 &  0          & 0  & 1 \\
  0 &  0          & \zeta_{72}^{6} &0 \\
  0 &  \zeta_{72}^{-6}  & 0 & 0\\
  1 &  0          & 0 & 0\\
 \end{pmatrix}
\]
and the action of $T:z \mapsto z+1$ on $\mathfrak{g}_i$  is given by
\[
(\mathfrak{g}_0,\mathfrak{g}_1,\mathfrak{g}_2,\mathfrak{g}_3) 
 \begin{pmatrix}
  0 &  0          & 1 & 0 \\
  1 &  0          & 0 &0 \\
  0 &  \zeta_{72}^{-6} & 0 & 0\\
  0 &  0          & 0 & \zeta_{72}^6\\
 \end{pmatrix}.
\]
%where $\lambda=\zeta_{24}^{-5}/\sqrt{-i}$.
The action of $\sigma_d$ on $\mathfrak{g}_i$ is given in terms of the following matrix
\[
 (\mathfrak{g}_0,\mathfrak{g}_1,\mathfrak{g}_2,\mathfrak{g}_3) 
 \begin{pmatrix}
  1 &  0          & 0 & 0 \\
  0 &     \zeta_{72}^{-2d+2}       & 0 &0 \\
  0 &  0            & \zeta_{72}^{2d-2} & 0\\
  0 &  0          & 0 & \frac{ \zeta_{72}^{6d}-\zeta_{72}^{30d}}
{\zeta_{72}^6 -\zeta_{72}^{30}}\\
 \end{pmatrix} \mbox{ if } d\equiv 1 \bmod 3 
\]
and 
\[
 (\mathfrak{g}_0,\mathfrak{g}_1,\mathfrak{g}_2,\mathfrak{g}_3) 
 \begin{pmatrix}
  1 &  0          & 0 & 0 \\
  0 &  0          & \zeta_{72}^{2d+2} &0 \\
  0 &  \zeta_{72}^{-2d-2}            & 0 & 0\\
  0 &  0          & 0 & 
\frac{ \zeta_{72}^{6d}-\zeta_{72}^{30d}}{\zeta_{72}^6 -\zeta_{72}^{30}}\\
 \end{pmatrix} \mbox{ if } d\equiv 2 \bmod 3 
\]
\end{lemma}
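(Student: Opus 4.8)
The plan is to read off all three transformation matrices directly from the behaviour of the Dedekind eta function, handling $T$, $S$ and $\sigma_d$ separately. The only analytic inputs are the two classical automorphy laws $\eta(\tau+1)=\zeta_{24}\eta(\tau)$ and $\eta(-1/\tau)=\sqrt{-i\tau}\,\eta(\tau)$, together with the full eta multiplier system
\[
\eta\left(\frac{a\tau+b}{c\tau+d}\right)=\varepsilon(a,b,c,d)\,\{-i(c\tau+d)\}^{1/2}\eta(\tau),\qquad
\varepsilon(a,b,c,d)=\exp\left(\pi i\left(\tfrac{a+d}{12c}-s(d,c)\right)\right)
\]
for $\begin{pmatrix}a&b\\c&d\end{pmatrix}\in\mathrm{SL}_2(\Z)$ with $c>0$, where $s(d,c)$ is the Dedekind sum. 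Everything in the lemma should fall out of these after careful bookkeeping of roots of unity, using $\zeta_{24}=\zeta_{72}^3$ and $\zeta_3=\zeta_{72}^{24}$.

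For the $T$-action I would simply substitute $\tau+1$ into each $\mathfrak{g}_i$. Because $\frac{(\tau+1)+2}{3}=\frac{\tau}{3}+1$, translation by $1$ permutes the three numerators $\eta\left(\frac{\tau+k}{3}\right)$ cyclically, while each $\eta(\tau+1)=\zeta_{24}\eta(\tau)$ in a denominator contributes a $\zeta_{24}$; collecting these powers reproduces the stated entries, e.g. $\mathfrak{g}_1(\tau+1)=\zeta_{24}^{-2}\mathfrak{g}_2=\zeta_{72}^{-6}\mathfrak{g}_2$, while $\mathfrak{g}_3(\tau+1)=\zeta_{24}^2\mathfrak{g}_3$ comes from applying the translation law three times to $\eta(3\tau)$. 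This step is routine.

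The crux is the $S$-action on $\mathfrak{g}_1$ and $\mathfrak{g}_2$. The swap $\mathfrak{g}_0\leftrightarrow\mathfrak{g}_3$ is immediate from the inversion law, the factor $\sqrt{3}$ appearing exactly as $\sqrt{-3i\tau}/\sqrt{-i\tau}$. For $\mathfrak{g}_1$ one has $\mathfrak{g}_1(-1/\tau)=\zeta_{24}^{-1}\eta\left(\frac{\tau-1}{3\tau}\right)/\eta(-1/\tau)$, so the task is to rewrite $\eta\left(\frac{\tau-1}{3\tau}\right)$ in terms of $\eta\left(\frac{\tau+2}{3}\right)$. The key observation is $\frac{\tau-1}{3\tau}=\gamma\cdot\frac{\tau+2}{3}$ for the specific matrix $\gamma=\begin{pmatrix}1&-1\\3&-2\end{pmatrix}\in\mathrm{SL}_2(\Z)$, for which $3\cdot\frac{\tau+2}{3}-2=\tau$, so the automorphy factor equals $\sqrt{-i\tau}$ and cancels against $\eta(-1/\tau)=\sqrt{-i\tau}\,\eta(\tau)$, leaving $\zeta_{24}^{-1}\varepsilon(\gamma)\,\mathfrak{g}_2$. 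The main obstacle is evaluating $\varepsilon(\gamma)$: one needs the Dedekind sum $s(-2,3)=\tfrac{1}{18}$, whence $\varepsilon(\gamma)=\exp\left(\pi i(-\tfrac{1}{36}-\tfrac{1}{18})\right)=\zeta_{24}^{-1}$ and therefore $\mathfrak{g}_1(-1/\tau)=\zeta_{24}^{-2}\mathfrak{g}_2=\zeta_{72}^{-6}\mathfrak{g}_2$. The remaining entry $\mathfrak{g}_2(-1/\tau)=\zeta_{72}^{6}\mathfrak{g}_1$ follows either from the analogous computation or for free from the constraint $M_S^2=I$ forced by $S^2=\mathrm{id}$ on $\mathbb{H}$. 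I expect that, conceptually, this is the only delicate point; the genuine risk is a bookkeeping slip in the branch of the square root or in a power of $\zeta_{72}$.

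Finally, for $\sigma_d$ I would read everything off the $q$-expansions in the uniformiser $q^{1/72}$. Writing $\mathfrak{g}_0=q^{-1/36}\prod(1-q^{n/3})/\prod(1-q^n)$ and noting that $\eta(3\tau)/\eta(\tau)$ has rational coefficients shows that $\sigma_d$ fixes $\mathfrak{g}_0$ and acts on $\mathfrak{g}_3$ only through $\sqrt3=\zeta_{72}^6-\zeta_{72}^{30}\mapsto\zeta_{72}^{6d}-\zeta_{72}^{30d}$, producing the stated fourth diagonal entry. For $\mathfrak{g}_1,\mathfrak{g}_2$ the coefficients lie in $\Q(\zeta_{72})$ and involve both the prefactors $\zeta_{24}^{\mp1}=\zeta_{72}^{\mp3}$ and the factors $\zeta_3=\zeta_{72}^{24}$ coming from $e^{2\pi i nk/3}$. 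Since $\sigma_d:\zeta_3\mapsto\zeta_3^{\,d}$, the residue of $d$ modulo $3$ decides whether the series for $\mathfrak{g}_1$ stays proportional to $\mathfrak{g}_1$ (when $d\equiv1$) or is carried to $\mathfrak{g}_2$ (when $d\equiv2$), and comparing leading coefficients yields the powers $\zeta_{72}^{\mp2d\pm2}$ exactly as stated. This last step is purely mechanical once the expansions are written down.
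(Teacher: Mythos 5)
Your argument is correct, and for $T$ and $\sigma_d$ it is essentially the paper's own proof: the paper likewise derives $T$ from $\eta(\tau+1)=\zeta_{24}\eta(\tau)$ and computes $\sigma_d$ by letting it act on the coefficients of the $q^{1/72}$-expansions, where only the constant roots of unity and the factors $\zeta_3^{\nu}\mapsto\zeta_3^{d\nu}$ move. The genuine difference is your treatment of the $S$-action on $\mathfrak{g}_1,\mathfrak{g}_2$, which is also the step where the paper is only a sketch: it asserts that $S$ "follows by using the transformation formulas" $\eta(\tau+1)=\zeta_{24}\eta(\tau)$ and $\eta(-1/\tau)=\sqrt{-i\tau}\,\eta(\tau)$, but those two laws by themselves give only the swap $\mathfrak{g}_0\leftrightarrow\mathfrak{g}_3$; to handle $\mathfrak{g}_1,\mathfrak{g}_2$ one must transform $\eta\left(\frac{\tau-1}{3\tau}\right)$ under an auxiliary element of $\mathrm{SL}_2(\Z)$, either by decomposing it into a word in $S,T$ and iterating the basic laws (the route implicit in the paper, which elsewhere uses exactly such decompositions, e.g.\ $\bar{S}_9,\bar{T}_9$), or, as you do, by invoking the full eta multiplier system. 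Your computation is right in every detail: $\gamma=\begin{pmatrix}1&-1\\3&-2\end{pmatrix}$ does carry $\frac{\tau+2}{3}$ to $\frac{\tau-1}{3\tau}$ with automorphy factor $\sqrt{-i\tau}$, the Dedekind sum is $s(-2,3)=\frac{1}{18}$, so $\varepsilon(\gamma)=\zeta_{24}^{-1}$ and $\mathfrak{g}_1\mapsto\zeta_{72}^{-6}\mathfrak{g}_2$, and deducing $\mathfrak{g}_2\mapsto\zeta_{72}^{6}\mathfrak{g}_1$ from $S^2=\mathrm{id}$ is legitimate. (One harmless slip in your $\sigma_d$ discussion: $\mathfrak{g}_2$ has no $\zeta_{24}$ prefactor; the leading coefficients of $\mathfrak{g}_1,\mathfrak{g}_2$ are $\zeta_{72}^{\mp 2}$, coming from $\zeta_{24}^{-1}\zeta_{72}$ and $\zeta_{72}^{2}$ respectively, and with these the stated exponents come out exactly as you claim.) The trade-off between the two routes: the paper's version stays within its two stated analytic inputs but omits the actual computation at the delicate entries, whereas yours needs one extra classical ingredient (the multiplier system with Dedekind sums) but is self-contained and verifies every entry of the matrices.
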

\begin{proof}
The action of $S,T$ follows by using the  transformation formulas of the $\eta$-function 
\cite{SilvII}:
\[
\eta(\tau+1)=e^{2\pi i\tau/24}\eta(\tau)\mbox{ and }
\eta\left(-\frac{1}{\tau}\right)=\sqrt{-i\tau}\eta(\tau).\]
For the action of $\sigma_d$ observe for example that 
\begin{eqnarray*}
 \eta \left(\frac{\tau}{3}+\frac{1}{3} \right) &= &
\exp\left (\frac{2\pi i }{24} \left(\frac{\tau}{3}+\frac{1}{3} \right)\right) 
\sum_{\nu=0}^{\infty} a_\nu
 \exp \left( 
\frac{2 \pi i \nu}{3 }\tau + \frac{2 \pi i \nu }{3}
\right)=\\
&=&\exp\left (\frac{2\pi i }{24} \frac{\tau}{3}\right) \zeta_{72} \sum_{\nu=0}^{\infty} \zeta_3^{\nu} a_\nu
 \exp \left( 
\frac{2 \pi i \nu}{3 }\tau 
\right).
\end{eqnarray*}
The element $\sigma_d :\zeta_{72} \mapsto \zeta_{72}^d$ sends $\zeta_3^\nu$ to 
$\zeta_3^{d\nu}=\zeta_3$  if $d\equiv 1 \bmod 3$ and to 
$\zeta_3^{2\nu}$ if $d\equiv 2 \bmod 3$.
Therefore 
\[
\sigma_d(\mathfrak{g}_1(\tau))=\sigma_d
\left(\zeta_{24}^{-1} 
\frac{\eta(\frac{\tau+1}3)}{\eta(\tau)}
\right)=
\left\{
\begin{array}{ll}
 \zeta_{72}^{-2d+2} \mathfrak{g}_1 & \mbox{ if } d\equiv 1 \bmod 3 \\
 \zeta_{72}^{-2d-2} \mathfrak{g}_2 & \mbox{ if } d\equiv 2\bmod 3 
\end{array}
\right. 
\]
\end{proof}
\begin{remark}
We have a representation 
\[
 \rho:\mathrm{SL}(2,\Z) \rightarrow \langle \mathfrak{g}_0,\mathfrak{g}_1,\mathfrak{g}_2,\mathfrak{g}_3
 \rangle_\mathbb{R}=V.
\]
This representation gives rise to the representation $\mathrm{Sym}^2 V$, where 
the space $\mathrm{Sym}^2 V$ has dimension $\binom{4}{2}=6$ and it is 
generated by the elements  $\mathfrak{g}_i \mathfrak{g}_j$, $1\leq i <j \leq 4$. The representation 
$\mathrm{Sym}^2 V$ was an alternative way to express the action given in \cite{KoKo}
in terms of the modular functions $R_i$. 
\end{remark}

We first study the group $(\mathcal{O}/72 \mathcal{O})^*$ for the values 
%$\frac{n+1}{4} \in \{5,11,17\}$ for 
$n=19,43,67$.
Using Chinese remainder theorem we compute first that 
\[
 \left( \frac{\mathcal{O} }{72 \mathcal{O} } \right)^* \cong 
\left(\frac{\mathcal{O}}{9\mathcal{O}}\right)^* \times \left(\frac{\mathcal{O}}{8\mathcal{O}}\right)^*.
\]
Notice that the assumptions we put force $2\mathcal{O},3\mathcal{O}$ to be prime ideals.
The structure of the group $\left(\frac{\mathcal{O} }{P^k \mathcal{O} } \right)^*$ for a prime 
ideal of $\mathcal{O}$ is given by the following:
\begin{theorem} \label{cohen}
 Let $P$ be a prime ideal of $\mathcal{O}$ of inertia degree $f$ over the field of rationals, i.e.
if $p$ is the generator of the principal ideal $P\cap \Z$ then $N(P)=p^f$ and assume that 
the ramification index $e(P/p)=1$.
The group $\left(\frac{\mathcal{O} }{P^k \mathcal{O} } \right)^*$ is isomorphic to 
the direct product $\left(\frac{\mathcal{O} }{P \mathcal{O} } \right)^* \times \frac{1+P}{1+P^k}$. 
The group $\left(\frac{\mathcal{O} }{P \mathcal{O} } \right)^*$ is cyclic of order $p^f-1$.
If $p\geq \min\{3,k\}$ then the group 
$ \frac{1+P}{1+P^k}$ is isomorphic to $\left(\frac{\Z}{p^{k-1} \Z}\right)^f$. If 
$p=2$ and $k=3$ 
then  $ \frac{1+P}{1+P^3}$ is isomorphic to $\left(\frac{\Z}{2\Z}\right)^2 \times \left(\frac{\Z}{4\Z}\right)^{f-1}$. 
\end{theorem}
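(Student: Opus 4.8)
\section*{Proof proposal}

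The plan is to argue locally. Since $P$ is a maximal ideal of finite index, the finite ring $\mathcal{O}/P^k$ is local with maximal ideal $P/P^k$ and residue field $\mathcal{O}/P$, a field with $N(P)=p^f$ elements; thus $(\mathcal{O}/P)^*$ is the multiplicative group of $\mathbb{F}_{p^f}$, which is cyclic of order $p^f-1$, giving that assertion at once. Reduction modulo $P$ furnishes the exact sequence
\[
1 \longrightarrow \frac{1+P}{1+P^k} \longrightarrow (\mathcal{O}/P^k)^* \longrightarrow (\mathcal{O}/P)^* \longrightarrow 1 .
\]
The kernel $\frac{1+P}{1+P^k}$ has order $|P/P^k| = |\mathcal{O}/P^{k-1}| = p^{f(k-1)}$ — here the hypothesis $e(P/p)=1$ enters, since it makes each graded piece $P^i/P^{i+1}$ of size $p^f$ — so it is a $p$-group, while $(\mathcal{O}/P)^*$ has order prime to $p$. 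As a finite abelian group is the internal direct product of its Sylow subgroups, the sequence splits and yields $(\mathcal{O}/P^k)^* \cong (\mathcal{O}/P)^* \times \frac{1+P}{1+P^k}$.

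It remains to determine the principal-unit group $U := \frac{1+P}{1+P^k}$, which I would filter by the subgroups $U_i := \frac{1+P^i}{1+P^k}$. The assignment $1+x \mapsto x \bmod P^{i+1}$ identifies $U_i/U_{i+1}$ with the additive group $P^i/P^{i+1}$, which by $e(P/p)=1$ is a one-dimensional $\mathcal{O}/P$-space, hence $\cong (\mathbb{Z}/p\mathbb{Z})^f$. The structure of $U$ is then controlled entirely by the $p$-th power map $\phi(u)=u^p$, which I would analyse through the binomial expansion $(1+x)^p = 1 + px + \binom{p}{2}x^2 + \cdots + x^p$, using the key fact that $v_P(p)=e(P/p)=1$, i.e. $p$ is a uniformizer at $P$.

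When $p$ is odd (so that $p \geq \min\{3,k\}$ holds automatically; the leftover cases $p=2$, $k\le 2$ are immediate, as then $1+P^2$ already collapses in $U$) every term of the expansion except $px$ lies in $P^{i+2}$, so $\phi$ maps $U_i$ into $U_{i+1}$ and induces on graded pieces the isomorphism given by multiplication by the uniformizer $p$. A short induction along the finite filtration then gives $U^p = U_2$, whence the $p$-rank of $U$ equals $\dim_{\mathbb{F}_p} U/U^p = f$; the same shift shows that $1+x$ with $x\in P\setminus P^2$ has order exactly $p^{k-1}$, so $U$ has exponent $p^{k-1}$. A finite abelian $p$-group with $f$ generators, exponent $p^{k-1}$, and order $p^{f(k-1)}$ is forced to be $(\mathbb{Z}/p^{k-1}\mathbb{Z})^f$, as claimed.

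The genuinely different case, which I expect to be the crux, is $p=2$, $k=3$, where the term $x^2$ no longer lands in the next filtration step: for $x\in P\setminus P^2$ both $2x$ and $x^2$ sit in $P^2\setminus P^3$. Writing $x=2u$ with $u$ a unit and using that $P=2\mathcal{O}$ locally, the induced map $U_1/U_2 \to U_2$ becomes $\bar u \mapsto \overline{u+u^2}$ once both sides are identified with $\mathbb{F}_{2^f}$; because squaring is additive in characteristic $2$, this is the Artin--Schreier map $t \mapsto t+t^2$, whose kernel is $\{0,1\}=\mathbb{F}_2$ of order $2$. Hence $U^2 = \phi(U)$ has order $2^{f-1}$, the $2$-rank of $U$ is $f+1$ (not $f$), and $U^4 \subseteq \phi(U_2)=1$ forces the exponent to divide $4$. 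With order $2^{2f}$, $2$-rank $f+1$, and exponent dividing $4$, writing $U\cong(\mathbb{Z}/4\mathbb{Z})^a\times(\mathbb{Z}/2\mathbb{Z})^b$ and solving $a+b=f+1$, $2a+b=2f$ gives $a=f-1$, $b=2$, i.e. $U \cong (\mathbb{Z}/2\mathbb{Z})^2 \times (\mathbb{Z}/4\mathbb{Z})^{f-1}$. The one delicate point is the identification of the squaring map with Artin--Schreier, which is exactly what produces the extra $\mathbb{Z}/2\mathbb{Z}$ factor distinguishing this case from the odd-prime one.
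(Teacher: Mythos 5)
Your proposal is correct, but it proves the theorem along a genuinely different route than the paper does. The paper's proof is essentially a citation proof: the splitting $\left(\mathcal{O}/P^k\right)^* \cong \left(\mathcal{O}/P\right)^* \times \frac{1+P}{1+P^k}$ is quoted from Cohen's book (prop.\ 4.2.4), the odd case is handled by the $P$-adic logarithm, which identifies the multiplicative group $\frac{1+P}{1+P^k}$ with the additive group $P/P^k \cong \mathcal{O}/P^{k-1}$ (lemma 4.2.9, thm.\ 4.2.10 of Cohen) --- this is precisely where the hypothesis $p \geq \min\{3,k\}$ enters, as a convergence condition for the logarithm --- and the exceptional case $p=2$, $k=3$ is again quoted (prop.\ 4.2.12). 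You instead give a self-contained argument: you get the splitting from the Sylow decomposition of a finite abelian group (kernel a $p$-group, quotient of prime-to-$p$ order), you replace the logarithm by a direct analysis of the $p$-th power map on the filtration $U_i = \frac{1+P^i}{1+P^k}$ via the binomial expansion, and you treat $p=2$, $k=3$ by hand, identifying the squaring map $U_1/U_2 \to U_2$ with the Artin--Schreier map $t \mapsto t+t^2$ on $\mathbb{F}_{2^f}$, whose kernel of order $2$ is exactly what produces the extra $\left(\mathbb{Z}/2\mathbb{Z}\right)$-factors. What the paper's route buys is brevity and, through Cohen's theorem, the structure formula for arbitrary ramification index $e$; what your route buys is a proof readable without the reference, a transparent explanation of why the hypothesis $p \geq \min\{3,k\}$ (i.e.\ $p$ odd, or $p=2$ with $k\le 2$) is the natural boundary, and a structural reason for the anomaly at $p=2$, $k=3$. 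One small point of attribution: the graded pieces $P^i/P^{i+1}$ are one-dimensional over $\mathcal{O}/P$ for any prime of a Dedekind domain regardless of $e$; the real use of $e(P/p)=1$ is the one you make later, namely that $p$ itself is a uniformizer at $P$, so that multiplication by $p$ shifts the filtration by exactly one step.
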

\begin{proof}
The group $\left(\frac{\mathcal{O} }{P^k \mathcal{O} } \right)^*$ is isomorphic to 
the direct product $\left(\frac{\mathcal{O} }{P \mathcal{O} } \right)^* \times \frac{1+P}{1+P^k}$ by 
proposition \cite[prop.\ 4.2.4]{CohenAdv}. 
For $e(P/p)=1$ and $p\geq \min\{3,k\}$ the  $P$-adic logarithmic function defines an isomorphism of 
 the mutliplicative group  $\frac{1+P}{1+P^k}$ to the additive group $P/P^k$
which in turn is isomorphic to $\mathcal{O}/P^{k-1}$ by \cite[lemma\ 4.2.9]{CohenAdv}.
The condition $p \geq  \min\{3,k\}$ is put  so that the logarithmic function converges.
By \cite[th.\ 4.2.10]{CohenAdv} we have 
\[
 \frac{\mathcal{O}}{P^{k-1}}\cong
\left(\frac{\Z}{p^q \Z}\right)^{(r+1)f} \times 
\left(
\frac{\Z}{p^{q-1}\Z}
\right)^{(e-r-1)f}
\]
where $k+e-2=eq+r$, $0\leq r <e$. If $e=1$ then the last formula becomes:
\[
 \frac{\mathcal{O}}{P^{k-1}}\cong
\left(\frac{\Z}{p^{k-1} \Z}\right)^{f}. 
\]
The case $p=2$ and $k=3$ is studied in \cite[prop.\ 4.2.12]{CohenAdv}.
\end{proof}
By applying theorem \ref{cohen}
we find the structure of the multiplicative groups
\[
 \left(\frac{\mathcal{O}}{9\mathcal{O}}\right)^*\cong \frac{\Z}{8\Z} \times \frac{\Z}{3\Z} \cong
\frac{\Z}{24\Z} \times \frac{\Z}{3\Z}
\]
and
\[
 \left(\frac{\mathcal{O}}{8\mathcal{O}}\right)^* \cong \frac{\Z}{3\Z} \times 
\left(\frac{\Z}{2\Z}\right)^2 \times \frac{\Z}{4\Z} \cong \frac{\Z}{12\Z} \times \left(\frac{\Z}{2\Z}\right)^2
\]
For finding the generators of these groups one can use the $P$-adic logarithmic function in order to 
pass from the multiplicative group $\frac{1+P}{1+P^k}$ to the 
additive group $\mathcal{O}/P^{k-1}\mathcal{O}$. This method does work only  for large primes 
(so that the logarithmic function is convergent) and not for the case $p=2$, $k=3$. 

In order to find the generators we proceed as follows: We exhaust all units in $\mathcal{O}/9\mathcal{O}$ 
until we find one unit $U_1$ of order $24$ then we remove this unit and all its powers from the set of possible units 
and we try again in order to find a unit $U_2$ of order $3$. For the units in $\mathcal{O}/8\mathcal{O}$
we work similarly. We first find  a unit  $V_1$ of   maximal order  $12$ remove all its powers 
 from the set of units and we  try again
in order to find a unit  $V_2$ of order $2$. We  remove   all products of powers of $U_1$ and $U_2$
 and then we search on the remaining units for the third generator $V_3$.
 Finally  we lift these  units to units of the ring $ \mathcal{O}/72\mathcal{O}$
using the Chinese remainder theorem.
% 
% 
% 
% Using brute force method with magma \cite{magma} 
% we see that $5\theta+7$ and $3\theta+7$ are generators of the group 
% $(\mathcal{O}/9\mathcal{O})^*$ and $7\theta+7,4\theta+7$ are generators of the group
%  $(\mathcal{O}/8\mathcal{O})^*$.
This way we arrived to  the following generators of the group $(\mathcal{O}/72 \mathcal{O})^*$:
  $5\theta+7$, $6\theta+7$, $7\theta +7$, $4\theta+7$, $4\theta+1$.
The orders of the generators of the group $(\mathcal{O}/72 \mathcal{O})^*$ are given in the 
following table:
\begin{center}
\begin{tabular}{|c|c|c|c|c|c|}
\hline
 Generator & $5\theta+7$ & $6\theta+7$ & $7\theta +7$ & $4\theta+7$ & $4\theta+1$ \\
\hline
Order        & 24               &   3 & 12 & 2 & 2\\
\hline
\end{tabular}
\end{center}
These generators will be mapped to matrices $A_i$ defined in theorem \ref{shimura1}.

For example the generator $5 \theta+7$ in $(\mathcal{O}/9\mathcal{O})^*$ corresponds to the matrix
\[
\begin{pmatrix} 3 & 8 \\5 & 16 \end{pmatrix}= \begin{pmatrix} 3 & 1 \\ 5 & 2 \end{pmatrix} \begin{pmatrix} 1 & 0 \\ 0  & 8 \end{pmatrix},
\]
where $M=\begin{pmatrix} 3 & 1 \\ 5 & 2 \end{pmatrix}$ is a matrix of determinant $1 \bmod 9$. 
Let 
\[T=\begin{pmatrix} 1 & 1 \\0 & 1 \end{pmatrix} \mbox{ and } S=\begin{pmatrix} 0 & 1 \\ -1 & 0 \end{pmatrix}.\]
The matrix $M$ can be decomposed according to \cite{GeeStevenhagen} as 
$\bar{T}_9^8 \bar{S}_9 \bar{T}_9^5 \bar{S}_9 \bar{T}_9^6$ where  
\[\bar{S}_9=T^{-1} ST^{-65} ST^{-1} ST^{1096} \mbox{ and } \bar{T}_9=T^{-9} \] according to \cite{KoKo}.
The action of the generators on the elements $\mathfrak{g}_0,\mathfrak{g}_1,\mathfrak{g}_2,\mathfrak{g}_3$
is computed by magma and it is given in  table \ref{Table:orders}.

\begin{table}\caption{Orders and generators of the group $(\mathcal{O}/72 \mathcal{O})^*$}
\label{Table:orders}
\begin{tabular}{|c|r|r|r|r|r|}
\hline
                     & $5\theta+7$ & $6\theta+7$ & $7\theta +7$ & $4\theta+7$ & $4\theta+1$ \\
\hline
$\mathfrak{g}_0$        &  $(-\zeta_{72}^{18} + \zeta_{72}^6)\mathfrak{g}_2$              &  $\zeta_3\mathfrak{g}_0$ & $\mathfrak{g}_0$ &  $-\mathfrak{g}_0$ &$\mathfrak{g}_0$ \\
\hline
$\mathfrak{g}_1$        &  $\zeta_{72}^{12}\mathfrak{g}_3$             &$\zeta_3\mathfrak{g}_1$   & $-\mathfrak{g}_1$ & $-\mathfrak{g}_1$  & $\mathfrak{g}_1$  \\
\hline
$\mathfrak{g}_2$         &  $ -\mathfrak{g}_1$              &  $ -\zeta_{72}^{12}\mathfrak{g}_2$ & $-\mathfrak{g}_2$ &$ -\mathfrak{g}_2$ & $\mathfrak{g}_2$ \\
\hline
$\mathfrak{g}_3$        &   $ (-\zeta_{72}^{18} + \zeta_{72}^6)\mathfrak{g}_0$          & $ -\zeta_{72}^{12}\mathfrak{g}_3 $ & $ -\mathfrak{g}_3$ & $ -\mathfrak{g}_3$   & $\mathfrak{g}_3$ \\
\hline
\end{tabular}
\end{table}
\begin{lemma} 
 The quantities $\mathfrak{g}_i(\theta)^6$ are in the ray  class field of conductor $3$.   
\end{lemma}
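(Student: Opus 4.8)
The plan is to show that the subgroup of $\mathrm{Gal}(H_{72,\mathcal{O}}/H_{\mathcal{O}})$ which fixes the conductor-$3$ ray class field also fixes each value $\mathfrak{g}_i^6(\theta)$. Since $\mathfrak{g}_i^6\in\mathcal{F}_{72}$, the value $\mathfrak{g}_i^6(\theta)$ lies a priori in the ray class field $H_{72,\mathcal{O}}$, whose Galois group over $H_{\mathcal{O}}=K(j(\theta))$ is $(\mathcal{O}/72\mathcal{O})^*/\{\pm1\}$. Because $3$ is inert, the conductor-$3$ ray class field $H_{3,\mathcal{O}}$ is the fixed field of the image in $(\mathcal{O}/72\mathcal{O})^*/\{\pm1\}$ of the reduction kernel $K:=\ker\big((\mathcal{O}/72\mathcal{O})^*\to(\mathcal{O}/3\mathcal{O})^*\big)$. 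Thus it suffices to check that every generator of $K$ acts trivially on $\mathfrak{g}_i^6(\theta)$; by Proposition \ref{refpro}, applicable since $\Q(j)\subset\Q(\mathfrak{g}_i)$ and $\mathcal{F}/\Q(\mathfrak{g}_i)$ is Galois by Proposition \ref{fund-ref}, this is equivalent to verifying that the associated matrices of Theorem \ref{shimura1} fix the modular function $\mathfrak{g}_i^6$.

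First I would pin down generators of $K$ in terms of the five generators of $(\mathcal{O}/72\mathcal{O})^*$ listed above. Under $(\mathcal{O}/72\mathcal{O})^*\cong(\mathcal{O}/9\mathcal{O})^*\times(\mathcal{O}/8\mathcal{O})^*$, reduction mod $3\mathcal{O}$ factors through $(\mathcal{O}/9\mathcal{O})^*$, so the entire $(\mathcal{O}/8\mathcal{O})^*$-part lies in $K$; this accounts for $7\theta+7$, $4\theta+7$, $4\theta+1$ (which were lifted to be $\equiv 1\bmod 9\mathcal{O}$). Inside $(\mathcal{O}/9\mathcal{O})^*\cong\Z/24\times\Z/3$, the reduction to $(\mathcal{O}/3\mathcal{O})^*=\mathbb{F}_9^*\cong\Z/8$ is surjective with kernel of order $9$: the order-$3$ generator $6\theta+7$ lies in $K$, whereas the order-$24$ generator $5\theta+7$ enters $K$ only through its eighth power $(5\theta+7)^8$. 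A quick order count ($9\cdot 48=432=\lvert K\rvert$) confirms that $K$ is generated by $(5\theta+7)^8$, $6\theta+7$, $7\theta+7$, $4\theta+7$, $4\theta+1$, and note that $-1\notin K$.

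Next I would read the action of these elements off Table \ref{Table:orders}. The generator $4\theta+1$ fixes every $\mathfrak{g}_i$; the generators $4\theta+7$ and $7\theta+7$ multiply each $\mathfrak{g}_i$ by $\pm1$, so the sixth powers are unchanged; and $6\theta+7$ multiplies $\mathfrak{g}_0,\mathfrak{g}_1$ by $\zeta_3$ and $\mathfrak{g}_2,\mathfrak{g}_3$ by $-\zeta_{72}^{12}$, so the sixth powers acquire the factors $\zeta_3^6=1$ and $(-\zeta_{72}^{12})^6=\zeta_{72}^{72}=1$. The only genuinely computational step is $(5\theta+7)^8$: the generator $5\theta+7$ permutes the indices by the $4$-cycle $(0\,2\,1\,3)$ with scalar factors, and on setting $c=-\zeta_{72}^{18}+\zeta_{72}^6=\zeta_{72}^{-6}$ one finds that the fourth power acts diagonally by $-c^2\zeta_{72}^{12}$ on each $\mathfrak{g}_i$, hence the eighth power acts by $c^4\zeta_{72}^{24}$; the induced factor on $\mathfrak{g}_i^6$ is $(c^4\zeta_{72}^{24})^6=c^{24}=\zeta_{72}^{-144}=1$.

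Since each generator of $K$ fixes $\mathfrak{g}_i^6$, the subgroup $K$ fixes $\mathfrak{g}_i^6(\theta)$, so $\mathfrak{g}_i^6(\theta)\in H_{3,\mathcal{O}}$, as claimed. I expect the main obstacle to be the $(5\theta+7)^8$ computation: it requires recognizing the closed form $c=\zeta_{72}^{-6}$ and correctly composing the permutation-with-scalars action four and eight times, together with the bookkeeping over the unit quotient $\{\pm1\}$ when passing between the conductor-$3$ and the reduction-mod-$3\mathcal{O}$ descriptions of the relevant subgroup.
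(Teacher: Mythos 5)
Your overall strategy is the same as the paper's: place $\mathfrak{g}_i^6(\theta)$ inside $H_{72,\mathcal{O}}$, identify the subgroup of $\mathrm{Gal}(H_{72,\mathcal{O}}/H_{\mathcal{O}})\cong(\mathcal{O}/72\mathcal{O})^*/\{\pm1\}$ corresponding to $H_{3,\mathcal{O}}$, and check against Table \ref{Table:orders} that its generators fix the sixth powers. Your bookkeeping of that subgroup is also consistent with the paper's: you generate it by the image of the strict reduction kernel $K$ (using $(5\theta+7)^8$), the paper by $(5\theta+7)^4$ together with the other four generators; since every element $\equiv -1 \bmod 3\mathcal{O}$ is $(-1)$ times an element of $K$, these give the same subgroup modulo $\{\pm1\}$. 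The single-generator checks for $6\theta+7$, $7\theta+7$, $4\theta+7$, $4\theta+1$ read correctly off the table.

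The genuine gap is the $(5\theta+7)^8$ step, exactly the one you flagged as the main obstacle. You iterate the first column of Table \ref{Table:orders} as if the action were $\mathbb{Q}(\zeta_{72})$-linear, but it is only semilinear: by the decomposition in Eq. (\ref{prodexp}), the action of $g_\theta(5\theta+7)$ carries a determinant component $\sigma_d$ with $d=\det g_\theta(5\theta+7)=N(5\theta+7)\bmod 72$, which is not $1$ here (for $n=19$ one gets $d=65$, and $\sigma_{65}(\zeta_{72}^{12})=\zeta_{72}^{-12}$), so each time you compose, the constants $c=\zeta_{72}^{-6}$ and $\zeta_{72}^{12}$ must themselves be conjugated by $\sigma_d$. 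Your computation treats them as fixed, and the result is wrong: you conclude that $(5\theta+7)^4$ acts by $-1$ on every $\mathfrak{g}_i$ and $(5\theta+7)^8$ acts trivially, whereas the paper's direct computation shows $(5\theta+7)^4=3\theta+8$ multiplies $\mathfrak{g}_0,\mathfrak{g}_1$ by $1-\zeta_{72}^{12}=\zeta_{72}^{-12}$ and $\mathfrak{g}_2,\mathfrak{g}_3$ by $\zeta_{72}^{12}$; consequently the eighth power multiplies the $\mathfrak{g}_i$ by the nontrivial cube roots of unity $\zeta_{72}^{\mp 24}$, not by $1$. The lemma's conclusion survives only by accident, because both the correct factors and your incorrect ones happen to be sixth roots of unity; as written, your argument does not establish it. The sound fix is the paper's route: compute the matrix $g_\theta\bigl((5\theta+7)^4\bigr)$ (equivalently, of $3\theta+8$) and apply the reciprocity action of that single matrix, via Proposition \ref{refpro}, rather than hand-composing the semilinear actions from the table.
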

\begin{proof}
There is the following diagram with exact rows for every $N$ (here we will use the values $N=72,3$:
\[
 \xymatrix{
\mathcal{O}^* \ar[r] & (\mathcal{O}/N \mathcal{O})^* \ar[r] \ar^{\bar{g}_\theta}[d]  & 
\mathrm{Gal}(H_{N,\mathcal{O}} /H_\mathcal{O}) \ar[r] & 1 \\
\{\pm 1\} \ar[r] & 
\mathrm{GL}_2(\Z/N\Z) \ar[r] & \mathrm{Gal}(\mathcal{F}_N/\mathcal{F}_1) \ar[r] &1,
}
\]
where $H_{N,\mathcal{O}}$ denotes the ray class field of conductor $N$. The epimorphism 
of the upper row is induced by the Artin map and allows us to see 
elements in $(\mathcal{O}/N \mathcal{O})^*$ as elements in 
$\mathrm{Gal}(H_{N,\mathcal{O}} /H_\mathcal{O})$.

 The ray class field $H_{3,\mathcal{O}}$  of conductor $3$ is an extension of degree $4$ 
of the ring  class field, 
as one computes looking at $(\mathcal{O}/3 \mathcal{O})^*/\mathcal{O}^*$.
Indeed, the group $(\mathcal{O}/3 \mathcal{O})^*$  is isomorphic to a cyclic group of order $8$ by theorem 
\ref{cohen} and by taking the quotient of $\mathcal{O}^*=\{\pm 1\}$ we arive at a group of order $4$.
 
The element $5 \theta+7$ generates a subgroup of order $24$ in 
$(\mathcal{O}/72 \mathcal{O})^*$. 
This means that the ray class field of conductor $3$ is the fixed field 
of $\langle (5 \theta+7)^4 \rangle$ and all other generators 
$ 6\theta+7,7\theta +7,4\theta+7,4\theta+1$. 

We compute that the action of $(5 \theta+7)^4=3\theta +8$ on 
$\mathfrak{g}_0,\mathfrak{g}_1,\mathfrak{g}_2,\mathfrak{g}_3$ is given by 
\[
 (\mathfrak{g}_0,\mathfrak{g}_1,\mathfrak{g}_2,\mathfrak{g}_3)\mapsto 
\left( (-\zeta_{72}^{12}+1) \mathfrak{g}_0,
 (-\zeta_{72}^{12}+1)\mathfrak{g}_1,
\zeta_{72}^{12} \mathfrak{g}_2,
\zeta_{72}^{12} \mathfrak{g}_3
\right)
\]

Since $\zeta_{72}^{12}, (\zeta_{72}^{12} - 1)$ are $6$th roots of unity 
we see that $(5 \theta+7)^4$, indeed leaves $\mathfrak{g}_0^6,\mathfrak{g}_1^6,
\mathfrak{g}_2^6,\mathfrak{g}_3^6$
invariant. 
On the other hand,  looking at table \ref{Table:orders} we see that 
 all other generators leave also $\mathfrak{g}_0^6,\mathfrak{g}_1^6,\mathfrak{g}_2^6,\mathfrak{g}_3^6$
 invariant.

Notice that the Galois group $\mathrm{Gal}(H_{n,\mathcal{O}}/K)$ is cyclic of order $4$ generated by 
$ 5\theta+7$ and the action is given by 
\[
 (\mathfrak{g}_0^6,\mathfrak{g}_1^6,\mathfrak{g}_2^6,\mathfrak{g}_3^6)\mapsto 
( -\mathfrak{g}_2^6,
\mathfrak{g}_3^6,
-\mathfrak{g}_1^6,
-\mathfrak{g}_0^6).
\]
\end{proof}
\begin{remark} \label{invarianttheory}
 Notice that we have a polynomial action of the 
permutation group $\langle (0,2,1,3)\rangle$ \footnote{Here in order to be compatible 
with the enumeration of $\mathfrak{g}_0,\mathfrak{g}_1,\mathfrak{g}_2,\mathfrak{g}_3$ we allow $0$
as a number in the permutations.} on the polynomial ring 
$\mathbb{Q}[\mathfrak{g}_0^{12},\mathfrak{g}_1^{12},\mathfrak{g}_2^{12},\mathfrak{g}_3^{12}]$.
The ring of invariants of this action can be computed to be the 
polynomial ring generated by the polynomials
\[
\mathfrak{g}_0^{12} + \mathfrak{g}_1^{12} + \mathfrak{g}_2^{12} +\mathfrak{g}_3^{12},
   \mathfrak{g}_0^{24} + \mathfrak{g}_1^{24} + \mathfrak{g}_2^{24} + \mathfrak{g}_3^{24},
    \mathfrak{g}_0^{12}\mathfrak{g}_1^{12} + \mathfrak{g}_2^{12}\mathfrak{g}_3^{12},
    \mathfrak{g}_0^{48} + \mathfrak{g}_1^{48} + \mathfrak{g}_2^{48} + \mathfrak{g}_3^{48}.
\]
Of course $\mathfrak{g}_0^{12} + \mathfrak{g}_1^{12} + \mathfrak{g}_2^{12} +\mathfrak{g}_3^{12}=-36$ is 
an invariant of the linear action but not an interesting one.
All these (and their combinations) will give class invariants. Notice that the class invariant 
$A_n$ introduced later  in this paper comes from  the third one 
$\mathfrak{g}_0^{12}\mathfrak{g}_1^{12} + \mathfrak{g}_2^{12}\mathfrak{g}_3^{12}$. 
\end{remark}
\begin{remark} \label{10}
Every polynomial expression given in remark \ref{invarianttheory} gives rise to 
a class invariant. What are the relations of these class invariants?
Set $Y_i=\mathfrak{g}_i^{12}$. We know that $Y_i$ satisfy equation 
\begin{equation} \label{gequ}
 Y_i^4 + 36 Y_i^3+270Y_i^2+(756-j)Y_i+3^6=0, 
\end{equation}
by proposition \ref{fund-ref}.
The first invariant given in remark \ref{invarianttheory} is just $36$. We then have 
\[
 36^2=\left( \sum_{i=0}^3 Y_i \right)^2=\left( \sum_{i=0}^3 Y_i^2 \right)+
2 \sum_{0\leq i<j \leq 3} Y_i Y_j = \left( \sum_{i=0}^3 Y_i^2 \right)-540.
\]
Therefore, 
\[
 \sum_{i=0}^3 Y_i^2=36^2+540=1836.
\]
We compute that 
\begin{eqnarray}
 36^3=\left( \sum_{i=0}^3 Y_i \right)^3 & = &\sum_{i=0}^3 Y_i ^3+ 6 \sum_{i,j,k} Y_i Y_j Y_k +
3 \sum_{i\neq j} Y_i Y_j^2 \nonumber \\
& =& \sum_{i=0}^3 Y_i ^3 +6(756-j)+ 3 \sum_{i\neq j} Y_i Y_j^2 \label{mac1}.
\end{eqnarray}
We now compute
\begin{eqnarray} \label{mac2}
1836\cdot 36 & =&  \left(\sum_{i=0}^3 Y_i  \right)\left(\sum_{j=0}^3 Y_i^2  \right)=  \sum_{i=0}^3 Y_i ^3+\sum_{i\neq j} Y_i Y_j^2.
\end{eqnarray}
By combining eq. (\ref{mac1}) and (\ref{mac2}) we obtain:
\[
 2 \sum_{i=0}^3 Y_i^3=151632+6(756-j).
\]
Finally, we compute that the last invariant given in remark \ref{invarianttheory} is given by eq. 
(\ref{gequ})
\[
 \sum_{i=0}^3 Y_i^4=-36 \sum_{i=0}^3 Y_i^3-270\sum_{i=0}^3Y_i^2-(756-j)\sum_{i=0}^3
Y_i-3^6.
\]
This means that the invariants of  remark \ref{invarianttheory}
are either constant or linear combinations of $j$ (and these would 
give polynomials with the same growth as the Hilbert polynomials)
and  
 $Y_0Y_1+Y_2Y_3$ which gives by evaluation at $\theta$ the 
$A_n$ class invariants. 
\end{remark}

\begin{remark}
 Notice that equation (\ref{gequ}) allows us to find the minimal polynomials (over the ring 
class field) of  the quantities 
$Z_i:=\mathfrak{g}_i(\theta)^6$, just by replacing $Y_i$ by $Z_i^2$.  
% These are polynomials 
% of degree $8$ that are compatible with the degree of the extension $H_{3,\mathcal{O}}/H$.  
\end{remark}

\begin{remark}
 Notice that using only powers of the $\mathfrak{g}_i$ modular functions 
we can only construct an extension of the ring  class field of order $4$. 
The Ramanujan invariants $H_n$ allow us to construct a quadratic extension of the 
ring  class field.
\end{remark}

We return now to the study of Ramanujan invariants. 
Using magma and the above computations  we compute that 
$5 \theta+7$ sends  $(1/R_2)^{12}$  to $-3^6/R_4^{12}$.
 Therefore  
$H_n$ is not a class invariant. 
Similarly we compute that all other generators of 
$\left( \frac{\mathcal{O} }{72 \mathcal{O} } \right)^*$
act trivially on $(1/R_2)^{12}$.
 The field generated by the class invariant $H_n$ is a quadratic extension 
of the ring  class field of $K$.

On the other hand the above computation allows us to compute the minimal polynomial $p_n\in \Z[x]$ of $H(n)$
by using the formula
\begin{equation} \label{pn}
 p_n (x) =\prod_{[a,b,c] \in \mathrm{Cl}(\mathcal{O})} 
\left( x-  3^{-3} R_2(\tau_0)^{-12 [a,-b,c]} \right) 
\left( x+  3^3 R_4(\tau_0)^{-12 [a,-b,c]} \right),
\end{equation}
with $\tau_0=\frac{-1+i\sqrt{n}}{2}$.
The results of these  computations for some values  $n=19+24i$, $i=0,\ldots 18$
 are shown in Table \ref{pnpoly1}.
We will now prove some properties for the minimal polynomials.
We will need the following lemma.
\begin{lemma} \label{invertibleR2}
The following identity holds:
\[
 \left( R_2(\tau)R_4(\tau) \right)^{12}=-1.
%  \frac{
% \left(
% \eta(3\tau)\eta\left(\frac{\tau}{3}+\frac{2}{3}\right)
% \eta\left(\frac{\tau}{3}\right)\eta\left(\frac{\tau}{3}+\frac{1}{3}\right)
% \right)^{12}
% }{\eta(\tau)^{48}}=-1.
\]
\end{lemma}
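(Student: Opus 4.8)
The plan is to unwind the identity into a statement about the single product $\mathfrak{g}_0\mathfrak{g}_1\mathfrak{g}_2\mathfrak{g}_3$ and then read off the value from Proposition \ref{fund-ref}. First I would substitute the factorizations $R_2=\sqrt{3}^{-1}\mathfrak{g}_2\mathfrak{g}_3$ and $R_4=\zeta_{24}\mathfrak{g}_0\mathfrak{g}_1$ from equations (\ref{R2}) and (\ref{R4}), which gives
\[
R_2(\tau)R_4(\tau)=\sqrt{3}^{-1}\zeta_{24}\,\mathfrak{g}_0(\tau)\mathfrak{g}_1(\tau)\mathfrak{g}_2(\tau)\mathfrak{g}_3(\tau).
\]
Raising to the twelfth power collapses the prefactor into $3^{-6}\zeta_{24}^{12}$ and leaves $(\mathfrak{g}_0\mathfrak{g}_1\mathfrak{g}_2\mathfrak{g}_3)^{12}=\prod_{i=0}^{3}\mathfrak{g}_i^{12}$ to be evaluated.

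Next I would appeal to Proposition \ref{fund-ref}: the functions $\mathfrak{g}_i^{12}$ are roots of the monic quartic $X^4+36X^3+270X^2+(756-j)X+3^6$, so by Vieta's formula the product of its four roots equals the constant term, giving $\prod_{i=0}^{3}\mathfrak{g}_i^{12}=3^6$. Substituting back,
\[
\bigl(R_2(\tau)R_4(\tau)\bigr)^{12}=3^{-6}\,\zeta_{24}^{12}\cdot 3^6=\zeta_{24}^{12}=-1,
\]
since $\zeta_{24}^{12}=e^{\pi i}=-1$. This is the whole computation.

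The one step that needs care — the main obstacle — is the passage from Proposition \ref{fund-ref} to $\prod_i\mathfrak{g}_i^{12}=3^6$: the proposition only asserts that \emph{each} $\mathfrak{g}_i^{12}$ satisfies the quartic, whereas Vieta's formula requires that the four functions are \emph{exactly} its four roots. I would justify this using the $S,T$-action recorded in the Lemma: since $S$ and $T$ permute the $\mathfrak{g}_i$ only up to factors $\zeta_{72}^{\pm 6}$, and $(\zeta_{72}^{6})^{12}=1$, the twelfth powers $\mathfrak{g}_0^{12},\ldots,\mathfrak{g}_3^{12}$ are permuted with no root-of-unity correction and so form a single $\mathrm{SL}(2,\Z)$-orbit of four distinct modular functions. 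Being four distinct roots of a monic quartic, they account for all of them, the quartic factors as $\prod_i\bigl(X-\mathfrak{g}_i^{12}\bigr)$, and its constant term is indeed their product. As an alternative that sidesteps this orbit argument entirely, one could compute $\mathfrak{g}_0\mathfrak{g}_1\mathfrak{g}_2\mathfrak{g}_3$ directly as the eta quotient $\zeta_{24}^{-1}\sqrt{3}\,\eta(\tau/3)\eta((\tau+1)/3)\eta((\tau+2)/3)\eta(3\tau)/\eta^4(\tau)$ and invoke the classical triple-argument product identity for $\eta$; I expect the Vieta route to be the shorter of the two.
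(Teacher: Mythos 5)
Your identity is correct, and your route is genuinely different from the paper's. The paper's proof is precisely the alternative you set aside at the end: it quotes Gee's observation \cite[p.~73]{GeePhD} that $\mathfrak{g}_0\mathfrak{g}_1\mathfrak{g}_2\mathfrak{g}_3=\sqrt{3}$ and combines it with equations (\ref{R2}) and (\ref{R4}); that route in fact yields the sharper intermediate identity $R_2(\tau)R_4(\tau)=\zeta_{24}$, of which the lemma is the twelfth power. What you do instead is derive the needed product value $\prod_{i=0}^{3}\mathfrak{g}_i^{12}=3^6$ from Proposition \ref{fund-ref} by Vieta's formula, which has the merit of being self-contained within results already stated in the paper rather than resting on an external citation.

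However, the step you yourself flagged is not yet watertight as written. Transitivity of the $\mathrm{SL}(2,\Z)$-action on the list $\mathfrak{g}_0^{12},\ldots,\mathfrak{g}_3^{12}$ does not by itself imply that these four functions are distinct: a group can permute a list containing repetitions. Distinctness is exactly what you need in order to conclude that the monic quartic factors as $\prod_i\bigl(X-\mathfrak{g}_i^{12}\bigr)$, so it must be argued separately. The cheap fix is by $q$-expansion: directly from the definitions,
\[
\mathfrak{g}_0^{12}=q^{-1/3}(1+\cdots),\qquad
\mathfrak{g}_1^{12}=\zeta_3^{2}\,q^{-1/3}(1+\cdots),\qquad
\mathfrak{g}_2^{12}=\zeta_3\, q^{-1/3}(1+\cdots),\qquad
\mathfrak{g}_3^{12}=3^6 q\,(1+\cdots),
\]
so the four functions are pairwise distinct (three distinct leading coefficients at the pole, while $\mathfrak{g}_3^{12}$ vanishes at the cusp). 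Alternatively, your group-theoretic idea can be salvaged: $T$ and $S$ induce the permutations $(0\,1\,2)$ and $(0\,3)(1\,2)$ of the indices, which generate a $2$-transitive group (in fact $A_4$); since the fibers of $i\mapsto\mathfrak{g}_i^{12}$ form an invariant partition, the functions are either all equal or all distinct, and comparing the expansions of $\mathfrak{g}_0^{12}$ and $\mathfrak{g}_3^{12}$ rules out the first case. With either patch, your Vieta computation is complete and gives a correct proof.
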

 \begin{proof}
A. Gee in \cite[p.\ 73]{GeePhD} observes that $\mathfrak{g}_0\mathfrak{g}_1\mathfrak{g}_2\mathfrak{g}_3=\sqrt{3}$.
The result follows by   eq. (\ref{R2}), (\ref{R4}).
\end{proof}

\begin{lemma} \label{invert111}
 Consider  a monic  polynomial $f(x)=x^n+\sum_{\nu=0}^{n-1} a_\nu x^{\nu}$, with $n$ even.
Consider the set of roots $\Sigma=\{\rho_1,\ldots,\rho_n\}$ of $f$ and assume that 
$f$ has no multiple roots. If the transformation 
$x \mapsto 1/x$ sends the above defined set of roots $\Sigma$ to $\Sigma$ then $a_0=1$ and $a_{\nu}=a_{n-\nu}$.
\end{lemma}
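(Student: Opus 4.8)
The plan is to exploit the reciprocal structure that the hypothesis forces on $f$. First I would record that $0\notin\Sigma$: the map $x\mapsto 1/x$ is defined only on nonzero numbers and, being injective on $\mathbb{C}^*$, restricts to a bijection of the finite set $\Sigma$ onto itself; hence no root is $0$, and in particular $a_0=(-1)^n\prod_i\rho_i\neq 0$. Now introduce the reciprocal polynomial $\tilde f(x):=x^n f(1/x)$. Expanding directly, and using that $f$ is monic of (even) degree $n$ with the displayed coefficients, gives
\[
 \tilde f(x)=a_0x^n+a_1x^{n-1}+\cdots+a_{n-1}x+1,
\]
whose roots are exactly $\{1/\rho:\rho\in\Sigma\}$.

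By hypothesis this set equals $\Sigma$, and since $f$ has $n$ distinct roots, $\tilde f$ is a polynomial of degree $n$ (its leading coefficient $a_0$ is nonzero) sharing the same $n$ simple roots as $f$. Two such polynomials agree up to a scalar, and comparing leading coefficients yields $\tilde f=a_0 f$. Equating constant terms then gives $1=a_0^2$, so $a_0=\pm1$, while equating the coefficient of $x^{n-\nu}$ gives $a_\nu=a_0\,a_{n-\nu}$ for every $\nu$. Thus the algebra alone delivers a palindromic-up-to-sign polynomial.

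It remains to fix the sign $a_0=+1$, which upgrades the relation to $a_\nu=a_{n-\nu}$, and this is the only delicate point: the identity $\tilde f=a_0 f$ by itself leaves open the anti-palindromic case $a_0=-1$ (realised, for instance, by $x^2-1$). The extra input needed is that the involution $\rho\mapsto1/\rho$ acts on $\Sigma$ \emph{without fixed points}, so that the roots split into $n/2$ genuine reciprocal pairs $\{\rho,1/\rho\}$; then $\prod_i\rho_i=1$, and since $n$ is even, $a_0=(-1)^n\prod_i\rho_i=1$. I would verify this fixed-point-free property directly in the situation of \eqref{pn}: for each class $[a,b,c]$ the two roots $3^{-3}R_2(\tau_0)^{-12[a,-b,c]}$ and $-3^3R_4(\tau_0)^{-12[a,-b,c]}$ have product
\[
 \bigl(3^{-3}R_2(\tau_0)^{-12[a,-b,c]}\bigr)\bigl(-3^3R_4(\tau_0)^{-12[a,-b,c]}\bigr)
 =-\bigl(R_2R_4\bigr)(\tau_0)^{-12[a,-b,c]}=-(-1)=1,
\]
by Lemma \ref{invertibleR2}, so they form a reciprocal pair and $x\mapsto1/x$ interchanges them. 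The main obstacle is therefore not the coefficient bookkeeping but isolating and justifying this pairing, since it is precisely what rules out $a_0=-1$ and forces $a_0=1$.
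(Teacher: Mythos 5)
Your core argument is the same as the paper's: form the reverse polynomial $\tilde f(x)=x^nf(1/x)$, note that its roots are the reciprocals of the roots of $f$, and compare the two polynomials. But you push the comparison one step further than the paper does, and that step is not pedantry. The paper's proof stops at ``the reverse polynomial has the reciprocals of the $\rho_i$ as roots'' and declares the result to follow; as you observe, what actually follows from equality of the two root sets (plus simplicity of the roots and $a_0\neq 0$) is only $\tilde f=a_0f$, hence $a_0^2=1$ and $a_\nu=a_0\,a_{n-\nu}$, and the anti-palindromic case $a_0=-1$ is not excluded. Your counterexample $x^2-1$ is decisive: it is monic of even degree, has distinct nonzero roots $\{1,-1\}$ stable under $x\mapsto 1/x$, yet $a_0=-1$. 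So Lemma \ref{invert111} as printed is false, and the paper's one-line proof has a genuine gap exactly where you located it.

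Your repair is also the right one, and it is what the application actually needs. Since the involution $\rho\mapsto 1/\rho$ permutes $\Sigma$ and the roots are distinct, $\Sigma$ splits into $2$-cycles (each contributing $1$ to $\prod_i\rho_i$) and fixed points $\pm 1$; since $n$ is even, $a_0=\prod_i\rho_i$, so $a_0=1$ holds precisely when $-1\notin\Sigma$. Your fixed-point-free hypothesis is sufficient (though slightly stronger than necessary: a fixed point at $+1$ is harmless), and in the one place the lemma is used --- the palindromicity of $p_n$ in eq.~(\ref{pn}) --- your verification is exactly the missing ingredient: by Lemma \ref{invertibleR2} the two roots attached to each class $[a,b,c]$ multiply to $1$, and the no-multiple-roots assumption forces the two members of each such pair to be distinct, so the involution has no fixed points and $a_0=1$. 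In short, your proof is correct where the paper's is not as written; the lemma should be restated with the hypothesis that $-1$ is not a root (or with your pairing hypothesis) for the downstream proposition on $p_n$ to be justified.
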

\begin{proof}
Write $f=\prod_{i=1}^n(x-\rho_i)$. By the assumption all roots $\rho_i\neq 0$. 
The result follows from the fact that the ``reverse polynomial''
$x^nf(1/x)$  is the polynomial $\prod_{i=1}^n (1-\rho_i X)$ having 
the reciprocals of $\rho_i$ as roots. 
% The constant coefficient is $1$ since it is the product of an even number of roots and 
% $\rho, 1/\rho$ are both roots. 
%  The coefficients of the polynomial $f$ are expressed in terms of the Vieta formul\ae:
% \[
%  a_{n-\nu} =\sum_{1\leq i_1<i_2<\cdots<i_\nu \leq n} \rho_{i_1}\rho_{i_2} \cdots \rho_{i_\nu}.
% \]
% By dividing $a_{n-\nu}$ by $\rho_1 \cdots \rho_n$ we obtain:
% \begin{eqnarray*}
%  a_{n-\nu} &= &\frac{a_{n-\nu}}{\prod_{\nu=1}^n \rho_\nu}= \\
%  & =& \sum_{1\leq i_1<i_2<\cdots<i_\nu \leq n} \frac{\rho_{i_1}\rho_{i_2} \cdots \rho_{i_\nu}}
% {\prod_{\nu=1}^n \rho_\nu} \\
%  & = & 
% \sum_{1 \leq j_1 < j_2 < \cdots < j_{n-\nu} \leq n} \frac{1}{\rho_{j_1} \cdots \rho_{n-\nu}}=a_{\nu}.
% \end{eqnarray*}
\end{proof}
\begin{proposition}
 The minimal polynomials $p_n(x)=x^{2h} +\sum_{\nu=0}^{2h-1} a_\nu x^\nu$ of $H(n)$ are palindromic, i.e. 
$a_\nu=a_{2h-\nu}$. The constant coefficient $a_0$ equals $1$. 
\end{proposition}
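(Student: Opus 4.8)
The plan is to reduce the whole statement to Lemma \ref{invert111} by showing that the involution $x \mapsto 1/x$ permutes the root set of $p_n$, and that $p_n$ is a separable monic polynomial of even degree. Monicity and the even degree $2h$ are immediate from the product (\ref{pn}): the product runs over the $h=|\mathrm{Cl}(\mathcal{O})|$ classes and each class contributes the two monic linear factors $\bigl(x-3^{-3}R_2(\tau_0)^{-12[a,-b,c]}\bigr)$ and $\bigl(x+3^3R_4(\tau_0)^{-12[a,-b,c]}\bigr)$. Separability is automatic because $p_n$ is the minimal polynomial of $H_n$ over $\mathbb{Q}$, hence irreducible and therefore without multiple roots in characteristic zero, so the hypotheses of Lemma \ref{invert111} will be met once the reciprocal-closure of the roots is established.

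First I would fix notation: for a class $[a,b,c]$ write $g$ for its Shimura-reciprocity action and set $u_g=\bigl(R_2(\tau_0)^{12}\bigr)^{g}$ and $v_g=\bigl(R_4(\tau_0)^{12}\bigr)^{g}$, so that the two roots attached to $[a,b,c]$ in (\ref{pn}) are $\alpha_g=3^{-3}u_g^{-1}$ and $\beta_g=-3^3 v_g^{-1}$. The key step is then to feed Lemma \ref{invertibleR2} through the Galois action. Evaluating that lemma at $\tau_0$ gives the equality of algebraic numbers $R_2(\tau_0)^{12}R_4(\tau_0)^{12}=-1$ in $K^{\mathrm{ab}}$; applying the automorphism $g$ and using that $-1$ is fixed yields $u_g v_g=-1$, hence $v_g^{-1}=-u_g$. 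Substituting, $\beta_g=-3^3 v_g^{-1}=3^3 u_g=1/\alpha_g$, so $x\mapsto 1/x$ interchanges the two roots attached to each class and in particular carries the full root set onto itself. Lemma \ref{invert111} then gives $a_\nu=a_{2h-\nu}$. Moreover each pair satisfies $\alpha_g\beta_g=1$, so the product of all $2h$ roots equals $1$; since the degree is even this product is exactly the constant term, giving $a_0=1$ directly.

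The hard part will be nothing more than recording carefully the compatibility between the functional identity and the arithmetic Galois action: one must observe that Lemma \ref{invertibleR2} is an identity of modular functions, so it descends to the special values $R_2(\tau_0)^{12},R_4(\tau_0)^{12}\in K^{\mathrm{ab}}$, and that the Shimura symbol $[a,-b,c]$ acts as a genuine field automorphism fixing the rational scalars $-1$ and $3^{\pm 3}$ that sit outside the action in (\ref{pn}). Once this is noted, raising to powers commutes with the automorphism, the computation $1/\alpha_g=\beta_g$ is forced, and no estimate or lengthy calculation is needed to finish.
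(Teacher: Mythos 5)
Your proposal is correct and follows essentially the same route as the paper: evaluate Lemma \ref{invertibleR2} at $\tau_0$, push the identity $R_2^{12}R_4^{12}=-1$ through the Galois action of each class $[a,-b,c]$ to see that the two roots attached to a class are reciprocals of one another, and then invoke Lemma \ref{invert111}. Your version is in fact slightly more careful than the paper's, since you explicitly verify the separability hypothesis of Lemma \ref{invert111} and note that the automorphisms fix the scalars $-1$ and $3^{\pm 3}$, points the paper leaves implicit.
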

\begin{proof}
From Eq.  (\ref{pn}) we have that 
whenever 
\[
H(n)^{[a,-b,c]}=3^{-3} R_2(\tau_0)^{-12 [a,-b,c]} 
\]
is a root then 
$
- 3^3 R_4(\tau_0)^{-12 [a,-b,c]}
$ is a root. But lemma \ref{invertibleR2} implies that 
\[
-3^3 R_4(\tau_0)^{-12 [a,-b,c]}=3^{-3} R_2^{12[a,-b,c]}=1/H_{n}.
\] The desired result now follows by 
lemma \ref{invert111}. 
\end{proof}

\begin{corollary}
 The values $H(n)$ are real units.
\end{corollary}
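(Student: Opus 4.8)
The plan is to prove two things separately: that $H(n)$ is real, and that it is a unit. The unit property is essentially forced by the preceding proposition, so the genuine content is the reality statement, and I would treat it first.

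\emph{Reality.} Since $H_n=3^{-3}R_2(\theta_n)^{-12}$ with $\theta_n=-\tfrac12+i\tfrac{\sqrt n}{2}$, it suffices to show that $R_2(\theta_n)$ is real. The key elementary fact is the identity $\overline{\eta(\tau)}=\eta(-\bar\tau)$, which holds because $\eta(\tau)=\sum_m a_m e^{2\pi i\tau(m+1/24)}$ has integer (hence real) coefficients $a_m$. Applying this to each of the three $\eta$-factors of $R_2$ in eq. (\ref{R2}) and using the crucial coincidence $-\overline{\theta_n}=\tfrac12+i\tfrac{\sqrt n}{2}=\theta_n+1$, I would rewrite $\overline{R_2(\theta_n)}$ as a product of $\eta$-values at $\psi:=\theta_n+1$. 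A short computation then expresses this in terms of $\mathfrak g_1(\psi)\mathfrak g_3(\psi)$, the stray factor $e^{-\pi i/12}\zeta_{24}$ coming from $\eta(\tau+1)=e^{\pi i/12}\eta(\tau)$ collapsing to $1$. Finally the translation row of the lemma giving the action of $S$ and $T$ yields $\mathfrak g_1(\theta_n+1)=\zeta_{72}^{-6}\mathfrak g_2(\theta_n)$ and $\mathfrak g_3(\theta_n+1)=\zeta_{72}^{6}\mathfrak g_3(\theta_n)$, so that the two sixth-roots of unity cancel and one lands back exactly on $\tfrac{1}{\sqrt3}\mathfrak g_2(\theta_n)\mathfrak g_3(\theta_n)=R_2(\theta_n)$. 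Thus $\overline{R_2(\theta_n)}=R_2(\theta_n)$, and $H_n\in\mathbb R$.

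\emph{Unit.} By Proposition \ref{minpoly1} the polynomial $p_n$ is monic with integer coefficients (the reality just established is exactly the hypothesis needed to apply it), so $H_n$ is an algebraic integer. The preceding proposition shows that $p_n$ is palindromic with constant coefficient $a_0=1$; equivalently, by Lemma \ref{invertibleR2} the reciprocal $1/H_n=3^{3}R_2(\theta_n)^{12}=-3^{3}R_4(\theta_n)^{-12}$ occurs as one of the factors displayed in eq. (\ref{pn}) and is therefore itself a root of $p_n$. Hence $1/H_n$ is again an algebraic integer, and an element of a number field that is an algebraic integer together with its inverse is a unit. (Equivalently, the norm of $H_n$ is $\pm a_0=\pm1$.) This completes the argument that $H(n)$ is a real unit.

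The routine parts — that $H_n$ is an algebraic integer and that $1/H_n$ is again a root of $p_n$ — follow formally from results already in hand. I expect the only real obstacle to be the $\eta$-transformation bookkeeping in the reality step: one must track every root of unity produced by the conjugation $\overline{\eta(\tau)}=\eta(-\bar\tau)$ together with the translation to $\psi=\theta_n+1$, and verify that all of them (the $e^{-\pi i/12}\zeta_{24}$ from the middle factor and the $\zeta_{72}^{\pm6}$ from the two $\mathfrak g_i$) cancel exactly, leaving $R_2(\theta_n)$ fixed by complex conjugation. This is the one place where explicit computation, rather than citation of the earlier results, is required.
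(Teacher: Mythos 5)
Your proposal is correct and follows the same route as the paper: the paper's entire proof is that $H(n)$ is real and that the product of the roots of $p_n$ is $a_0=1$, which is precisely your unit argument (your remark that $1/H_n$ is itself a root of the palindromic $p_n$, hence an algebraic integer, is the same fact in equivalent form). The only difference is that the paper dismisses reality as clear, whereas you prove it via $\overline{\eta(\tau)}=\eta(-\bar\tau)$ and the translation formulas on the line $\mathrm{Re}\,\tau=-\tfrac12$; your bookkeeping is right --- the factors $\zeta_{24}^{-1}\cdot\zeta_{24}$ and $\zeta_{72}^{-6}\cdot\zeta_{72}^{6}$ cancel exactly as claimed, so you have correctly supplied a detail the paper leaves unstated.
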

\begin{proof}
 This is clear since $H(n)$ is real and the product of all roots of $p_n$ is $a_0=1$. 
\end{proof}

\begin{corollary}
 The polynomials $p_n(x)$ have the following simplified form:
\[
  p_n (x) =\prod_{[a,b,c] \in \mathrm{Cl}(\mathcal{O})} 
\left( x-  3^{-3} R_2(\tau_0)^{-12 [a,-b,c]} \right) 
\left( x-  3^{3} R_2(\tau_0)^{12 [a,-b,c]} \right),
\]
\end{corollary}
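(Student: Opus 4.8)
The plan is to read off the corollary directly from the defining product in Eq.~(\ref{pn}) by rewriting its second factor. Comparing the two expressions, the only difference is that each factor $\left(x + 3^3 R_4(\tau_0)^{-12[a,-b,c]}\right)$ appearing in Eq.~(\ref{pn}) is to be replaced by $\left(x - 3^3 R_2(\tau_0)^{12[a,-b,c]}\right)$. Hence it suffices to establish, for every class $[a,b,c]\in\mathrm{Cl}(\mathcal{O})$, the single numerical identity $3^3 R_4(\tau_0)^{-12[a,-b,c]} = -3^3 R_2(\tau_0)^{12[a,-b,c]}$, and then substitute term by term into the product.

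First I would invoke Lemma~\ref{invertibleR2}, which gives the functional identity $\left(R_2(\tau) R_4(\tau)\right)^{12} = -1$. Evaluating at $\tau = \tau_0$ produces the relation $R_2(\tau_0)^{12} R_4(\tau_0)^{12} = -1$ between algebraic numbers, and solving for the $R_4$-term and inverting yields $R_4(\tau_0)^{-12} = -R_2(\tau_0)^{12}$. Next I would apply the Galois automorphism attached to the class $[a,-b,c]$ to this last relation. Since this automorphism fixes the rational number $-1$, it carries the relation to $R_4(\tau_0)^{-12[a,-b,c]} = -R_2(\tau_0)^{12[a,-b,c]}$, which is exactly the identity needed. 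Multiplying by $3^3$ and substituting into Eq.~(\ref{pn}) then turns each factor $\left(x + 3^3 R_4(\tau_0)^{-12[a,-b,c]}\right)$ into $\left(x - 3^3 R_2(\tau_0)^{12[a,-b,c]}\right)$, giving the claimed simplified form.

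The computation is essentially routine, so there is no real obstacle; the only point deserving a moment's care is the commutation of the Galois action with the functional identity of Lemma~\ref{invertibleR2}. One must note that $\left(R_2 R_4\right)^{12} = -1$ holds not merely as a numerical accident at $\tau_0$ but stably under the full Galois action, precisely because the right-hand side is the constant $-1 \in \Q$, which every automorphism fixes. This is the same observation already used in the proof of the preceding palindromicity proposition, so the corollary is an immediate repackaging of that computation.
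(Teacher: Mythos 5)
Your argument is correct and is essentially the paper's own route: the paper proves the corollary implicitly inside the proof of the palindromicity proposition, where Lemma~\ref{invertibleR2} is used to rewrite $-3^3 R_4(\tau_0)^{-12[a,-b,c]}$ as $3^{3}R_2(\tau_0)^{12[a,-b,c]}$, exactly your term-by-term substitution. Your explicit remark that the identity $(R_2R_4)^{12}=-1$ is Galois-stable because $-1\in\Q$ is the same (tacit) observation the paper relies on, so nothing is missing.
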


We have seen that $H_n$ is not a class invariant. But the quantity $A_n=H_n+\frac{1}{H_n}$ 
is a class invariant as we can verify using theorem \ref{shimura1}. This new invariant is 
not a unit anymore. 

The minimal polynomial $q_n\in \Z[x]$ of $A_n$ is given by 
\[
  q_n (x) =\prod_{[a,b,c] \in \mathrm{Cl}(\mathcal{O})} 
\left( x-  3^{-3} R_2(\tau_0)^{-12 [a,-b,c]} -3^{3} R_2(\tau_0)^{12 [a,-b,c]} \right).
\]
 
In Table \ref{qnpoly1} we give minimal polynomials $q_n$ 
for $19\leq n \leq 451$, $n\equiv 19 \bmod 24$. 

Observe that if $p_n=\sum_{\nu=0}^{2h} a_\nu x^\nu$ and 
$q_n=\sum_{\nu=0}^{h} b_\nu x^\nu$ then 
$b_\nu=a_{h+\nu}$ as one can prove using the Vieta formul\ae.

\begin{table}[H] \label{pnpoly1}
\caption{Polynomials $p_n$ for $19\leq n \leq 451$, $n\equiv 19 \bmod 24$.}
\[
{\tiny
\begin{array}{|l|c|l|}
\hline
n & \mbox{C.N.} & p_n(x)\\
\hline
19 & 1 &
x^2  -302x + 1 \\
\hline
43 & 1 & 
x^2  -33710x + 1 \\
\hline
67 & 1 & 
x^2  -1030190x + 1 \\
\hline
91 & 2 &
x^4 -17590492x^3 + 148475718 x^2 + -17590492x  +1 \\
\hline
115 &2 &
x^4 -210267100x^3 + 424646982x^2 -210267100x + 1 \\
\hline
139 & 3 & 
x^6 -1960891530x^5 -13943617329x^4 -30005622092x^3 -13943617329x^2  \\
& & -1960891530x +1\\
\hline
163 & 1 &
x^2 -15185259950x +1 \\
\hline
187 & 2 & 
x^4 -101627312860x^3 + 1102664076102x^2 -101627312860x + 1 \\
\hline
211 & 3 &
x^6 -604100444298x^5 + 20137792248015x^4 -414952590867788x^3 \\
& & + 20137792248015x^2 -604100444298x + 1\\
\hline 
235 & 2  &
x^4 -3253104234460x^3 + 47263043424582x^2 -3253104234460x + 1 \\
\hline 
259 & 4 &
x^8 -16106786824376x^7 -810131323637348x^6 -9925794993033992x^5 + \\
 & &26425093196592454x^4 -9925794993033992x^3 -810131323637348x^2  \\
 & & -16106786824376x + 1
\\
\hline 
283& 3 &
x^6 -74167114012170x^5 -119654555118897x^4 -3009681130315340x^3 \\
& & -119654555118897 x^2 -74167114012170x +1 
\\
\hline 
307 & 3 & 
x^6 -320508447128970x^5 -1963936794491697x^4 
-5740503875332940x^3 \\
& &-1963936794491697x^2 -320508447128970x + 1
\\
\hline
331 & 3 & 
x^6 -1309395837485706x^5 +113317118488006863x^4 
-11556648519941425484x^3  \\
& &+ 113317118488006863x^2 -1309395837485706x + 1
\\
\hline 
355 & 4 &
x^8 -5087640031882040x^7 + 583328538578918044x^6 -16479665770932342920x^5 \\
& & + 172809183517820572486x^4 -16479665770932342920x^3 \\
  && +583328538578918044x^2 -5087640031882040x + 1\\
\hline 
379 & 3 & 
x^6 -18895199824010634x^5 -4124999225954564913x^4 -274501369688142310220x^3 \\
&&-4124999225954564913x^2 -18895199824010634x +1
\\
\hline
403 & 2 & 
x^4 -67361590779141340x^3 + 
361802623368357702x^2 -67361590779141340x + 1\\
\hline
427 & 2  &
x^4 -231347688320676700x^3 + 2519902537964728902x^2 -231347688320676700
x + 1\\
\hline 
451 & 6 &
x^{12} -767819046799630740x^{11} + 161913605740919729922x^{10}  \\
&& -66458029641477066911812x^9 
-1654687781430584516238609x^8 \\
&&  -33875537641085268651117096x^7 \\
&&  + 81879258106346356247143452x^6 -33875537641085268651117096x^5 \\
&&  -1654687781430584516238609x^4 -66458029641477066911812x^3 \\
&&  + 161913605740919729922x^2 -767819046799630740x + 1\\
\hline 
\end{array}
}
\]
\end{table}

\begin{table}[H] \label{qnpoly1}
\caption{Polynomials $q_n$ for $19\leq n \leq 451$, $n\equiv 19 \bmod 24$.}
\[
{\tiny
\begin{array}{|l|c|l|}
\hline
n & \mbox{C.N.} & p_n(x)\\
\hline
19 & 1 &
x  -302 \\
\hline
43 & 1 & 
x  -33710  \\
\hline
67 & 1 & 
x  -1030190x  \\
\hline
91 & 2 &
x^2 -17590492x + 148475718  \\
\hline
115 &2 &
x^2 -210267100x + 424646982 \\
\hline
139 & 3 & 
x^3 -1960891530x^2 -13943617329x -30005622092\\
\hline
163 & 1 &
x -15185259950\\
\hline
187 & 2 & 
x^2 -101627312860x + 1102664076102 \\
\hline
211 & 3 &
x^3 -604100444298x^2+ 20137792248015x -414952590867788\\
\hline 
235 & 2  &
x^2 -3253104234460x + 47263043424582 \\
\hline 
259 & 4 &
x^4 -16106786824376x^3 -810131323637348x^2 -9925794993033992x + \\
 & &26425093196592454
\\
\hline 
283& 3 &
x^3 -74167114012170x^2 -119654555118897x -3009681130315340
\\
\hline 
307 & 3 & 
x^3 -320508447128970x^2 -1963936794491697x 
-5740503875332940
\\
\hline
331 & 3 & 
x^3 -1309395837485706x^2 +113317118488006863x 
-11556648519941425484
\\
\hline 
355 & 4 &
x^4 -5087640031882040x^3 + 583328538578918044x^2 -16479665770932342920x \\
& & + 172809183517820572486\\
\hline 
379 & 3 & 
x^3 -18895199824010634x^2 -4124999225954564913x -274501369688142310220x^3
\\
\hline
403 & 2 & 
x^2 -67361590779141340x + 
361802623368357702\\
\hline
427 & 2  &
x^2 -231347688320676700x + 2519902537964728902\\
\hline 
451 & 6 &
x^{6} -767819046799630740x^{5} + 161913605740919729922x^{4}  \\
&& -66458029641477066911812x^3 
-1654687781430584516238609x^2 \\
&&  -33875537641085268651117096x   + 81879258106346356247143452\\
\hline 
\end{array}
}
\]
\end{table}

\subsection{Some Unit Computations}
\label{3.1}
Professor P. Stevenhagen proposed to us the study of the following situation:
Consider the field $R_n:=\mathbb{Q}(H_n) \subset \mathbb{R}$. The field $R_n$ is 
an abelian extension of degree $2h$ of $\Q$ where $h$ is the class number of the 
order  $\Z[\theta_n]$. If $h=1$, i.e., when $n=19,43,67,163$ then $R_n$ is a 
real quadratic extension of $\Q$ and we can verify using magma that $H_n$ is 
a fundamental unit.

% In the general case we have the following tower of fields:
% \[
%  \xymatrix{
%  &K(H_n(\theta)) &
%  
% 
% }
% \]
The field $R_n$ has $\mathbb{Q}(\sqrt{3n})$ as a subfield and we might ask 
if $\mathrm{N}_{R_n/\Q(\sqrt{3n})}H_n$ is a fundamental unit of the 
real quadratic field  $\mathbb{Q}(\sqrt{3n})$.
Using magma we compute that this is not always the case. In the following table we give the 
index of the subgroup  generated by $\mathrm{N}_{R_n/\Q(\sqrt{3n})}H_n$  
inside the group generated by the fundamental unit:
\begin{center}
{
\begin{tabular}{||c||l|l|l|l|l|l|l|l|l|l||}
\hline
\hline
N &19 & 43 & 67 & 91 & 115 & 139 & 163 & 187 & 211 &235  \\
\hline
index & 1 &1 & 1 & 2 & 2 &1 & 1 & 2 & 1 & 2 \\
\hline
\hline
N  & 259 & 283 & 307 & 331 &
355 & 379 & 403 & 427 & 451  & 475 \\
\hline
index &  4 & 1 & 1 & 3 & 2 & 1 & 2 & 2 & 2 & 8\\
\hline 
\hline
\end{tabular} 
}
\end{center}
The authors could not find an obvious pattern for the behavior of the index. 
Professor J. Antoniadis pointed to us the following pattern:  If the index is one then $N$ is prime but not 
vice versa since $331$ gives index $3$. We have checked that this is correct for all 
values of $N< 1000$.

\section{Computing class invariants for the ${3 \bmod 24}$ case}
\label{3mod24}

In this case the prime $2$ remains inert in $\mathcal{O}$ while $3$ ramifies. 
We compute first the structure of the group $\left(\mathcal{O}/72\mathcal{O}\right)^*$.
Modulo $72$ we have the following  values $n=3,27,51$ that are equivalent to $3$ modulo $24$.
The structure of the group $(\mathcal{O}/72 \mathcal{O})^*$ is computed to be the following:
\begin{center}
 \begin{tabular}{|c|c|c|}
\hline
  $n$ & $ (\mathcal{O}/9 \mathcal{O})^*$ & $(\mathcal{O}/8 \mathcal{O})^*$ \\
\hline
  3    & $\Z/6\Z \times \Z/3\Z \times \Z/3\Z$ & $ \Z/12\Z \times \Z/2\Z \times \Z/2\Z$ \\
  27  & $\Z/18\Z  \times \Z/3\Z $ & $\Z/12\Z \times \Z/2\Z \times \Z/2\Z$ \\
  51  & $\Z/18\Z  \times \Z/3\Z$  & $\Z/12\Z \times \Z/2\Z \times \Z/2\Z$ \\
\hline
 \end{tabular}
\end{center}
 The actions of the generators $\tau_i$  of each direct cyclic summand on the elements 
$\mathfrak{g}_0,\mathfrak{g}_1,\mathfrak{g}_2,\mathfrak{g}_3$ for each  case  is computed to be:\\
% \[
%  \begin {array}{|c||r|r|r|r|r|r|r|} 
% \hline
%  &\tau_1 & \tau_2 & \tau_3 & \tau_4 & \tau_5 & \tau_6 \\
% \hline
% \tau(\mathfrak{g}_0) &{ \mathfrak{g}_0}&-{ \mathfrak{g}_0}&{ \mathfrak{g}_0}& ( -{
% \zeta }^{18}+{\zeta }^{6} ) { \mathfrak{g}_1}&-{\zeta }^{12}{ \mathfrak{g}_0}&
%  ( {\zeta }^{12}-1 ) { \mathfrak{g}_0}\\
% \tau(\mathfrak{g}_1)
%  &{}-{ \mathfrak{g}_1}
% &-{ \mathfrak{g}_1}&{ \mathfrak{g}_1}& ( -{\zeta }^{12}+1 ) { \mathfrak{g}_3}&-{\zeta 
% }^{12}{ \mathfrak{g}_1}&-{\zeta }^{12}{ \mathfrak{g}_1}\\
% \tau(\mathfrak{g}_2) &
%  {}-{ \mathfrak{g}_2}&-{
%  \mathfrak{g}_2}&{ \mathfrak{g}_2}&-{ \mathfrak{g}_2}&{ \mathfrak{g}_2}&-{\zeta }^{12}{ \mathfrak{g}_2}
% \\
% \tau(\mathfrak{g}_3) & {}-{ \mathfrak{g}_3}&-{ \mathfrak{g}_3}&{ \mathfrak{g}_3}&-{\zeta }^{18}{ 
% \mathfrak{g}_0}&-{\zeta }^{12}{ \mathfrak{g}_3}& ( {\zeta }^{12}-1 ) { \mathfrak{g}_3}\\
% \hline
% \end {array} 
% \]
$\mathbf{n=3\bmod 72}$
\[
\begin{array}{|c||r|r|r|r|r|r|r|}
\hline  & \tau_{1} & \tau_{2} & \tau_{3} & \tau_{4} & \tau_{5} & \tau_{6}\\
\hline \tau(\mathfrak{g}_{0}) & {\mathfrak{g}_{0}} & -{\mathfrak{g}_{0}} & {\mathfrak{g}_{0}} & (-{\zeta}^{18}+{\zeta}^{6}){\mathfrak{g}_{1}} & -{\zeta}^{12}{\mathfrak{g}_{0}} & ({\zeta}^{12}-1){\mathfrak{g}_{0}}\\
\tau(\mathfrak{g}_{1}) & {}-{\mathfrak{g}_{1}} & -{\mathfrak{g}_{1}} & {\mathfrak{g}_{1}} & (-{\zeta}^{12}+1){\mathfrak{g}_{3}} & -{\zeta}^{12}{\mathfrak{g}_{1}} & -{\zeta}^{12}{\mathfrak{g}_{1}}\\
\tau(\mathfrak{g}_{2}) & {}-{\mathfrak{g}_{2}} & -{\mathfrak{g}_{2}} & {\mathfrak{g}_{2}} & -{\mathfrak{g}_{2}} & {\mathfrak{g}_{2}} & -{\zeta}^{12}{\mathfrak{g}_{2}}\\
\tau(\mathfrak{g}_{3}) & {}-{\mathfrak{g}_{3}} & -{\mathfrak{g}_{3}} & {\mathfrak{g}_{3}} & -{\zeta}^{18}{\mathfrak{g}_{0}} & -{\zeta}^{12}{\mathfrak{g}_{3}} & ({\zeta}^{12}-1){\mathfrak{g}_{3}}\\\hline \end{array}\]
$\mathbf{n=27 \bmod 72}$
\[
\begin{array}{|c||r|r|r|r|r|r|r|}
\hline  & \tau_{1} & \tau_{2} & \tau_{3} & \tau_{4} & \tau_{5} & \tau_{6}\\
\hline \tau(\mathfrak{g}_{0}) & {\mathfrak{g}_{0}} & -{\mathfrak{g}_{0}} & {\mathfrak{g}_{0}} & {\zeta}^{6}{\mathfrak{g}_{1}} & -{\zeta}^{12}{\mathfrak{g}_{0}} & ({\zeta}^{12}-1){\mathfrak{g}_{0}}\\
\tau(\mathfrak{g}_{1}) & {}-{\mathfrak{g}_{1}} & -{\mathfrak{g}_{1}} & {\mathfrak{g}_{1}} & {\zeta^{12}}{\mathfrak{g}_{3}} & -{\zeta}^{12}{\mathfrak{g}_{1}} & -{\zeta}^{12}{\mathfrak{g}_{1}}\\
\tau(\mathfrak{g}_{2}) & {}-{\mathfrak{g}_{2}} & -{\mathfrak{g}_{2}} & {\mathfrak{g}_{2}} & -{\mathfrak{g}_{2}} & {\mathfrak{g}_{2}} & -{\zeta}^{12}{\mathfrak{g}_{2}}\\
\tau(\mathfrak{g}_{3}) & {}-{\mathfrak{g}_{3}} & -{\mathfrak{g}_{3}} & {\mathfrak{g}_{3}} & -{\zeta}^{18}{\mathfrak{g}_{0}} & -{\zeta}^{12}{\mathfrak{g}_{3}} & ({\zeta}^{12}-1){\mathfrak{g}_{3}}\\\hline \end{array}\]
$\mathbf{n=51 \bmod 72}$
\[
\begin{array}{|c||r|r|r|r|r|r|r|}
\hline  & \tau_{1} & \tau_{2} & \tau_{3} & \tau_{4} & \tau_{5} & \tau_{6}\\
\hline \tau(\mathfrak{g}_{0}) & {\mathfrak{g}_{0}} & -{\mathfrak{g}_{0}} & {\mathfrak{g}_{0}} & {\zeta^{18}}{\mathfrak{g}_{1}} & -{\zeta}^{12}{\mathfrak{g}_{0}} & ({\zeta}^{12}-1){\mathfrak{g}_{0}}\\
\tau(\mathfrak{g}_{1}) & {}-{\mathfrak{g}_{1}} & -{\mathfrak{g}_{1}} & {\mathfrak{g}_{1}} & -{\mathfrak{g}_{3}} & -{\zeta}^{12}{\mathfrak{g}_{1}} & -{\zeta}^{12}{\mathfrak{g}_{1}}\\
\tau(\mathfrak{g}_{2}) & {}-{\mathfrak{g}_{2}} & -{\mathfrak{g}_{2}} & {\mathfrak{g}_{2}} & -{\mathfrak{g}_{2}} & {\mathfrak{g}_{2}} & -{\zeta}^{12}{\mathfrak{g}_{2}}\\
\tau(\mathfrak{g}_{3}) & {}-{\mathfrak{g}_{3}} & -{\mathfrak{g}_{3}} & {\mathfrak{g}_{3}} & {\zeta}^{18}{\mathfrak{g}_{0}} & -{\zeta}^{12}{\mathfrak{g}_{3}} & ({\zeta}^{12}-1){\mathfrak{g}_{3}}\\\hline \end{array}\]
Now if we raise the $\mathfrak{g}_i$ functions to $12$ we observe that $(\mathcal{O}/72 \mathcal{O})^*$
acts like a $3$-cycle on $\mathfrak{g}_i^{12}$ leaving $\mathfrak{g}_2^{6}$ invariant. 
Therefore $\mathfrak{g}_2^{12}$ gives rise to  a class invariant but also the functions 
$\mathfrak{g}_0^{12}+\mathfrak{g}_1^{12}+\mathfrak{g}_3^{12}$ give rise to class invariants but since 
their sum is $-36$ the $\mathfrak{g}_2^{12}$ is more interesting (it involves computation of 
only one modular function). 
In table  \ref{aa33} we present some small class polynomials 
for the invariant $\mathfrak{g}_2^{12}$.
% and $\mathfrak{g}_0^{12}+\mathfrak{g}_1^{12}+\mathfrak{g}_3^{12}$
%respectively. 
Notice that $\mathfrak{g}_2^6$ is a also a class invariant but its minimal polynomial 
 does not have  coefficients in $\Z$. In table \ref{bb33} we  present some of the 
minimal polynomials of $\mathfrak{g}_2^6$ that are in $\Z[\sqrt{D'}][x]$, 
where $D'$ is the core discriminant of $D$, i.e., the non-square part of $D$.

\begin{table}[H] 
\caption{Polynomials for the invariant $\mathfrak{g}_2^{12}$, $n\equiv 3 \bmod 24$. \label{aa33}}
{\tiny
\begin{tabular}{|l|l|l|}
\hline 
n & C.N. & polynomials \\
\hline
3 & 1 & x+27 \\
\hline
27 & 1 & $x + 243$ \\
\hline
51 & 2 &
$
x^2
 + 1817 x
 + 63408
$ \\
\hline
75 & 2 & $ x^2
 + 8694 x
 + 729 $ \\
\hline
99 & 2 & 
$x^2
 + 33538 x
 + 675212
$
\\
\hline
133 & 2 & 
$
x^2
 + 110682 x
 + 3982527$\\
\hline
147 & 2 & $
x^2
 + 326646 x
 + 729$ 
\\
\hline
171 & 4 & 
$
x^4
 + 885577 x^3
 + 75449123 x^2
 + 1878791197 x
 + 9480040943
$ \\ \hline 
195  &4 & 
$
x^4
 + 2243057 x^3
 + 134435463 x^2
 + 2044439302 x
 + 4021471722$
\\ \hline
219 & 4 & 
$
x^4
 + 5374182 x^3
 + 177358410 x^2
 + 3337735739 x
 + 452759$
\\ \hline
243 & 3 & 
$
x^3
 + 12288753 x^2
 - 36669429 x
 + 129140163$
\\ \hline
267 & 2 & $
x^2
 + 27000090 x
 + 972001215$
\\ \hline
291 & 4 &
$
x^4
 + 57302460 x^3
 + 6191231603 x^2
 + 190393837000 x
 + 2422188$
\\ \hline 
315 & 4 & $
x^4
 + 117966740 x^3
 + 5465452595 x^2
 - 18078266775 x
 - 2283511958571$
\\ \hline 
339 & 6 & 
$
x^6
 + 236380194 x^5
 + 16297323547 x^4
 + 865456023300 x^3
 + 28951950717535 x^2$ \\
& &
$
 + 379087533199695 x
 + 3423896293014081$
\\ \hline 
363 & 4 & 
$
x^4
 + 462331692 x^3
 + 22863777174 x^2
 + 337039803468 x
 + 531441$
\\ \hline 
387 & 4 & 
$
x^4
 + 884736829 x^3
 + 65027878839 x^2
 + 1219285304855 x
 + 878209991853$
\\ \hline 
411 & 6 & 
$
x^6
 + 1659823938 x^5
 + 299376470893 x^4
 + 17714533511043 x^3
 + 122181573194844 x^2$ \\ & &
 $- 5409428705176675 x
 + 70928211329527433$
\\ \hline 
\end{tabular}
}
\end{table}

\begin{table}[H] 
\caption{Polynomials for the invariant $\mathfrak{g}_2^{6}$, $n\equiv 3 \bmod 24$. \label{bb33}}
{\tiny
\begin{tabular}{|r|c|l|}
\hline
D & C.N. & polynomials \\
\hline
-3 &
1&
$
x
 - 3 \sqrt{D'}
$ \\ \hline
-27 &
1&
$
x
 - 9 \sqrt{D'}
$ \\ \hline
-51 &
2&
$
x^2
 - 6 \sqrt{D'} x
 - 27
$ \\ \hline
-75 &
2&
$
x^2
 - 54 \sqrt{D'} x
 - 27
$ \\ \hline
-99 &
2&
$
x^2
 - 54 \sqrt{D'} x
 + 729
$ \\ \hline
-123 &
2&
$
x^2
 - 30 \sqrt{D'} x
 - 27
$ \\ \hline
-147 &
2&
$
x^2
 - 330 \sqrt{D'} x
 - 27
$ \\ \hline
-171 &
4&
$
x^4
 - 216 \sqrt{D'} x^3
 - 486 x^2
 - 19683
$ \\ \hline
-195 &
4&
$
x^4
 - 108 \sqrt{D'} x^3
 - 15714 x^2
 + 2916 \sqrt{D'} x
 + 729
$ \\ \hline
-219 &
4&
$
x^4
 - 156 \sqrt{D'} x^3
 + 22302 x^2
 + 4212 \sqrt{D'} x
 + 729
$ \\ \hline
-243 &
3&
$
x^3
 - 2025 \sqrt{D'} x^2
 - 6561 x
 + 6561 \sqrt{D'}
$ \\ \hline
-267 &
2&
$
x^2
 - 318 \sqrt{D'} x
 - 27
$ \\ \hline
-291 &
4&
$
x^4
 - 444 \sqrt{D'} x^3
 - 32130 x^2
 + 11988 \sqrt{D'} x
 + 729
$ \\ \hline
-315 &
4&
$
x^4
 - 1836 \sqrt{D'} x^3
 - 7290 x^2
 - 78732 \sqrt{D'} x
 + 531441
$ \\ \hline
-339 &
6&
$
x^6
 - 834 \sqrt{D'} x^5
 + 293355 x^4
 + 123444 \sqrt{D'} x^3
 - 7920585 x^2
 - 607986 \sqrt{D'} x
 - 19683
$ \\ \hline
-363 &
4&
$
x^4
 - 12420 \sqrt{D'} x^3
 - 218754 x^2
 + 335340 \sqrt{D'} x
 + 729
$ \\ \hline
-387 &
4&
$
x^4
 - 4536 \sqrt{D'} x^3
 - 486 x^2
 - 19683
$ \\ \hline

\end{tabular}
}
\end{table}

\section{An Application to Elliptic Curve Generation}

An important application of class invariants is that their minimal polynomials can be used for the
generation of elliptic curves over finite fields. In particular, a method called {\em Complex Multiplication}
 or {\em CM method} 
is used being raised from the theory of Complex Multiplication (CM) of elliptic curves
over the rationals. 
In the case of prime
fields $\mathbb{F}_p$, 
the CM method starts with the specification of
a discriminant value $D$, the determination of the order $p$ of
the underlying prime field and the order $m$ of the elliptic curve (EC). It then
computes the Hilbert polynomial,
which is uniquely determined by $D$ and locates one of its roots
modulo $p$. This root can be used to construct the parameters of
an EC with order $m$ over the field $\mathbb{F}_{p}$.

Alternative classes of polynomials can be used in the CM method as long as there is a transformation of their roots
modulo $p$ to the roots of the corresponding Hilbert polynomials. Clearly, polynomials $q_n$ can be used in the
CM method. However, firstly we have to find a relation between the $j$-invariant and the values $A_n$. 
Using \cite[lemma\ 3]{KoKo2} we obtain the following relation between the $j$-invariant $j_n$ and the Ramanujan values $t_n$:
\begin{equation} \label{relate1}
j_n = (t_n^6-27t_n^{-6}-6)^3. 
\end{equation}
If we set $C=t_n^6-27t_n^{-6}$ then we easily observe that 
\begin{equation}
\label{c-eq}
C^2 = 27(A_n - 2). 
\end{equation}
\begin{remark}
 An other way to obtain a formula that relates the values $H_n$, $t_n$ is working with equation 
(\ref{gequ}) that relates $\mathfrak{g}_i^{12}$ to $j$. We compute the coefficients of the polynomial
$\prod_{0\leq i < j \leq 3} (X-\mathfrak{g}_i^{12}\mathfrak{g}_j^{12})$. These are symmetric polynomials in 
the variables $\mathfrak{g}_i^{12}$ and can be expressed as polynomials on the elementary 
symmetric polynomials
which  up to sign are the coefficients of the polynomial in eq. (\ref{gequ}). Using this approach with 
 magma we arrive at
 \begin{eqnarray*}
G(Y,j) &= &
   {Y}^{6}-270\,{Y}^{5}+ \left( -36\,j+26487 \right) {Y}^{4}+ \\
 & &  \left( -{j}
^{2}+1512\,j-1122660 \right) {Y}^{3} + \left( -26244\,j+19309023
 \right) {Y}^{2}\\
 &  &-143489070\,Y+387420489.                  
 \end{eqnarray*}
 We arrive at the same polynomial if we eliminate $t_n^6$ from eq. (\ref{relate1}) and 
the definition of $H_n$. 
\end{remark}

If we wish to construct an EC over a prime field $\mathbb{F}_p$ using the $q_n$ polynomials, we have to
 find one of their roots modulo $p$ and then transform it to a root of the corresponding 
Hilbert polynomial $j_n$. The root $j_n$ can be acquired using Eq. ~(\ref{c-eq}) and the relation $j_n = (C-6)^3$.

Let us give a brief example on how $q_n$ polynomials can be used in the CM method. 
Suppose that we wish to generate an EC over the prime field $\mathbb{F}_p$ where 
$p=2912592100297027922366637171900365067697538262949$ and we 
choose to use a discriminant equal to $n=259$. Initially, 
the CM method having as input the values $p$ and $n$ constructs the order of the
 EC which is equal to a prime number \[m=2912592100297027922366635123877214056291799441739.\]
 Then, the polynomial $q_{259}(x)$ is constructed
\begin{eqnarray*}
q_{259}(x)  & = &  x^4-16106786824376x^3-810131323637352x^2  \\ & &-9877474632560864x+ 28045355843867152. 
\end{eqnarray*}
This polynomial has four roots modulo $p$. Every such root can be transformed to a root $j_{n}$ 
using Eq. ~(\ref{c-eq}) and the relation $j_n = (C-6)^3$. Eq. ~(\ref{c-eq})
 will result to two values $C$ and this means that for every root modulo $p$ of the $q_{259}(x)$
 polynomial we will have two roots $j_{259}$. However, only one of these two roots 
gives rise to an EC with order $m$. 
The correct choice is made easily: we follow the steps of the CM method, construct an EC having as input a value 
$j_{259}$ and then check whether the resulted curve (or its twist) has indeed order $m$. 
If the answer is negative, then this value is rejected.

For example, one root modulo $p$ of the $q_{259}(x)$ polynomial is equal to 
\[
r = 292000143869356471233943284623526736899256758497. 
\]
Using Eq. ~(\ref{c-eq}), we compute the two solutions 
\[
C_1 = 1555795526891231123931549739786994193545044932499
\] 
and 
\[
C_2 = 1356796573405796798435087432113370874152493330450
\]
and therefore the two possible values of the $j$-invariant are 
\[
j_1 = 2662539171725102375366109856465433412332472450493
\]
and 
\[
j_2 = 1859938916666171899538097507602720023646246323886. 
\]
Selecting the first value $j_1$, we construct the EC $y^2 = x^3 + ax + b$ where 
\[
a = 1545339657951389136173847270246016180230953846699
\]
and 
\[
b = 59362405201916783327019122863889097588123143483.
\]
In order to check if this EC (or its twist) has order $m$, we choose a point $P$ at random in
it and we compute the point 
$Q=mP$. If this point is equal to the point at infinity then the EC has order $m$. Making the necessary computations for the above EC, we see that this is the case and our construction is finished. Thus, we conclude that we have chosen the correct $j$-invariant and the second value $j_2$ is rejected.

\providecommand{\bysame}{\leavevmode\hbox to3em{\hrulefill}\thinspace}
\providecommand{\MR}{\relax\ifhmode\unskip\space\fi MR }
% \MRhref is called by the amsart/book/proc definition of \MR.
\providecommand{\MRhref}[2]{%
  \href{http://www.ams.org/mathscinet-getitem?mr=#1}{#2}
}
\providecommand{\href}[2]{#2}


\begin{thebibliography}{1}

\bibitem{AtkinMorain}
A.~O.~L. Atkin and F.~Morain, \emph{Elliptic curves and primality proving},
  Math. Comp. \textbf{61} (1993), no.~203, 29--68. \MR{MR1199989 (93m:11136)}

\bibitem{Berndt-Chan}
Bruce~C. Berndt and Heng~Huat Chan, \emph{Ramanujan and the modular
  {$j$}-invariant}, Canad. Math. Bull. \textbf{42} (1999), no.~4, 427--440.
  \MR{MR1727340 (2002a:11035)}

\bibitem{BSS}
I.F. Blake, G.Seroussi, and  N.P. Smart, \emph{Elliptic Curves in Cryptography}
London Mathematical Society Lecture Note Series vol. 165 

\bibitem{magma} Wieb Bosma, John Cannon, and Catherine Playoust.
\emph{The Magma algebra system. I. The user language.}
 J. Symbolic Comput., \textbf{24} (3-4):235-265, 1997 

\bibitem{GeeHuatTan}
Heng~Huat Chan, Alice Gee, and Victor Tan, \emph{Cubic singular moduli,
  {R}amanujan's class invariants {$\lambda\sb n$} and the explicit {S}himura
  reciprocity law}, Pacific J. Math. \textbf{208} (2003), no.~1, 23--37.
  \MR{MR1979370 (2004h:11038)}

\bibitem{CohenAdv}
Henri Cohen, \emph{Advanced topics in computational number theory},
Graduate Texts in Mathematics,
  {193},
 {Springer-Verlag},
   {New York},
       {2000},
      \MR{1728313 (2000k:11144)}

\bibitem{Cox}
David~A. Cox, \emph{Primes of the form {$x\sp 2 + ny\sp 2$}}, A
  Wiley-Interscience Publication, John Wiley \& Sons Inc., New York, 1989,
  Fermat, class field theory and complex multiplication. \MR{MR1028322
  (90m:11016)}

\bibitem{GeeBordeaux}
Alice Gee, \emph{Class invariants by {S}himura's reciprocity law}, J. Th\'eor.
  Nombres Bordeaux \textbf{11} (1999), no.~1, 45--72, Les XX\`emes Journ\'ees
  Arithm\'etiques (Limoges, 1997). \MR{MR1730432 (2000i:11171)}

\bibitem{GeePhD}
Alice Gee, \emph{Class Fields by Shimura Reciprocity}, Ph.D. thesis, Leiden University
available online at http://www.math.leidenuniv.nl/nl/theses/44.


\bibitem{GeeStevenhagen}
Alice Gee and Peter Stevenhagen, \emph{Generating class fields using {S}himura
  reciprocity}, Algorithmic number theory (Portland, OR, 1998), Lecture Notes
  in Comput. Sci., vol. 1423, Springer, Berlin, 1998, pp.~441--453.
  \MR{MR1726092 (2000m:11112)}

\bibitem{KKSZ04_icisc}
E.~Konstantinou, A.~Kontogeorgis, Y.~Stamatiou, and C.~Zaroliagis, Generating Prime Order Elliptic Curves:
Difficulties and Efficiency Considerations, in {\em
International Conference on Information Security and Cryptology} -- ICISC 2004,
Lecture Notes in Computer Science Vol.~3506, pp. 261--278, Springer, 2005.

\bibitem{KoKo}
Elisavet Konstantinou and Aristides Kontogeorgis, \emph{Computing polynomials
  of the {R}amanujan {$t_n$} class invariants}, Canad. Math. Bull. \textbf{52}
  (2009), no.~4, 583--597. \MR{MR2567152}

\bibitem{KoKo2}
Elisavet Konstantinou and Aristides Kontogeorgis, 
\emph{Introducing Ramanujan's Class Polynomials in the Generation of Prime Order Elliptic Curves}
  Computers and Mathematics with Applications  (to appear) 


\bibitem{pari}
Pari/GP Number Theory System. Available from:
\newblock http://www.parigp-home.de/


\bibitem{RamNotebooks}
Srinivasa Ramanujan, \emph{Notebooks. {V}ols. 1, 2}, Tata Institute of
  Fundamental Research, Bombay, 1957. \MR{MR0099904 (20 \#6340)}

\bibitem{Shimura}
Goro Shimura, \emph{Introduction to the arithmetic theory of automorphic
  functions}, Publications of the Mathematical Society of Japan, vol.~11,
  Princeton University Press, Princeton, NJ, 1994, Reprint of the 1971
  original, Kano Memorial Lectures, 1. \MR{MR1291394 (95e:11048)}


\bibitem{SilvII}
Joseph~H. Silverman, \emph{Advanced topics in the arithmetic of elliptic
  curves}, Graduate Texts in Mathematics, vol. 151, Springer-Verlag, New York,
  1994. \MR{MR1312368 (96b:11074)}

\bibitem{steven2}
Peter Stevenhagen,
\emph{
Hilbert's 12th problem, complex multiplication and Shimura reciprocity.} 
 Class field theory---its centenary and prospect (Tokyo, 1998), 161--176,
Adv. Stud. Pure Math., 30, Math. Soc. Japan, Tokyo, 2001.
\MR{MR 18464571 (2002i:11110)   }

\bibitem{HiSi} M. Hindry, and J. Silverman, \emph{Diophantine geometry An introduction}, Graduate Texts in Math-
ematics, Springer-Verlag, New York, 2000.


\bibitem{YuiZagier}
Noriko Yui and Don Zagier, \emph{On the singular values of {W}eber modular
  functions}, Math. Comp. \textbf{66} (1997), no.~220, 1645--1662.
  \MR{MR1415803 (99i:11046)}

\bibitem{Weber} 
H. Weber, \emph{Lehrbuch der {A}lgebra}, Band III, 2nd edition, Chelsea reprint , original edition 1908.

\end{thebibliography}
\end{document}